\setlist[enumerate]{itemsep=2pt,parsep=2pt,before={\parskip=2pt}}
\newtheorem{theorem}{Theorem}[section]
\newtheorem*{theorem*}{Theorem}
\newtheorem*{definition*}{Definition}
\newtheorem{proposition}[theorem]{Proposition}
\newtheorem{lemma}[theorem]{Lemma}
\newtheorem{corollary}[theorem]{Corollary}
\theoremstyle{definition}
\newtheorem{definition}[theorem]{Definition}
\newtheorem{remark}[theorem]{Remark}
\newtheorem{example}[theorem]{Example}
\newtheorem{notation}[theorem]{Notation}
\newcommand{\R}{\ensuremath{\mathbf{R}}}
\newcommand{\C}{\ensuremath{\mathbf{C}}}
\newcommand{\Z}{\ensuremath{\mathbf{Z}}}
\newcommand{\N}{\ensuremath{\mathbf{N}}}
\newcommand{\F}{\ensuremath{\mathbf{F}}}
\newcommand{\eps}{\varepsilon}
\newcommand{\m}[1]{\mu_{#1}}
\newcommand{\mh}[1]{\widehat{\mu_{#1}}}
\newcommand{\ind}[1]{\mathbf{1}_{#1}}
\newcommand{\inner}[2]{\left\langle#1, #2\right\rangle}
\newcommand{\norm}[1]{\left\|#1\right\|}
\newcommand{\opnorm}[1]{\left\|#1\right\|_{\mathrm{op}}}
\newcommand{\geqs}[1]{\gtrsim_{#1}}
\newcommand{\leqs}[1]{\lesssim_{#1}}
\newcommand{\A}{\ensuremath{\mathcal{A}}}
\newcommand{\B}{\ensuremath{\mathcal{B}}}
\newcommand{\blue}[1]{\textcolor{blue}{#1}}
\begin{document}

\begin{frontmatter}[classification=text]

\title{New Bound for Roth's Theorem\\with Generalized Coefficients} 

\author[pil]{C\'edric Pilatte}

\begin{abstract}
We prove the following conjecture of Shkredov and Solymosi: every subset $A \subset \Z^2$ such that $\sum_{a\in A\setminus\{0\}} 1/\norm{a}^{2}   = +\infty$ contains the three vertices of an isosceles right triangle. To do this, we adapt the proof of the recent breakthrough by Bloom and Sisask on sets without three-term arithmetic progressions, to handle more general equations of the form $T_1a_1+T_2a_2+T_3a_3 = 0$ in a finite abelian group $G$, where the $T_i$'s are automorphisms of $G$. 
\end{abstract}
\end{frontmatter}

\section{Introduction}
\label{sec:intro}

In their 2020 breakthrough paper, Bloom and Sisask \cite{BS} improved the best known upper bound on the largest possible size of a subset of $\{1, 2, \dots, n\}$ without three-term arithmetic progression. They showed that, if $A \subset \{1, 2, \dots, n\}$ has no non-trivial three-term arithmetic progression, then 
\begin{equation*}
|A| \ll \frac{n}{(\log n)^{1+c}}
\end{equation*}
for some absolute constant $c>0$. The best previously available bound was ${n}/{(\log n)^{1-o(1)}}$, which had been obtained in four different ways \cite{sanders,bloomlondon,bsdiscrete,schoenrecent}.

Their result received a lot of attention as it settled the first interesting case of one of Erd\H{o}s' most famous conjectures. Erd\H{o}s conjectured that, if $A\subset \N$ is such that $\sum_{n\in A} 1/n$ diverges, then $A$ contains infinitely many $k$-term arithmetic progressions, for every $k\geq 3$. The result of Bloom and Sisask implies the case $k = 3$. The general case seems to be well beyond the reach of the current techniques.

The theorem of Bloom and Sisask can be applied to the prime numbers to recover a result of Green in analytic number theory. It is an old result of Van der Corput that the set of primes contains infinitely many three-term arithmetic progressions. Much more recently, Green \cite{green} generalized this fact to relatively dense subsets of the primes. The theorem of Bloom and Sisask gives a different proof of this, where Chebyshev's estimate $\pi(x) \gg x/\log{x}$ is the only fact about the primes that is used.

A three-term arithmetic progression is a solution to the equation $a_1 -2a_2 + a_3 = 0$. In this paper, we generalize the proof of Bloom and Sisask to deal with equations of the form $T_1a_1 +T_2a_2 + T_3a_3 = 0$ for an extended class of coefficients $T_1$, $T_2$ and $T_3$. More precisely, we prove the following in \cref{sec:pfmain}.

\begin{theorem}
\label{thm:main}
Let $G$ be a finite abelian group and let $T_1, T_2, T_3$ be automorphisms of $G$ such that $T_1 + T_2 + T_3 = 0$. If $A$ is a subset of $G$ without non-trivial solutions\footnote{A solution $(a_1, a_2, a_3)\in A^3$ is trivial if $a_1 = a_2 = a_3$.} to the equation 
\begin{equation}
\label{eq:ourequation}
T_1 a_1 + T_2 a_2 + T_3 a_3 = 0,
\end{equation}
then
\begin{equation*}
|A| \ll \frac{|G|}{(\log|G|)^{1+c}}
\end{equation*}
where $c > 0$ is an absolute constant.\footnote{In particular, the constant $c$ does not depend on $G$ or on the coefficients $T_i$.}
\end{theorem}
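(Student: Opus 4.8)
The plan is to adapt the Bloom--Sisask argument for three-term progressions essentially verbatim, but replacing the additive structure tied to the equation $a_1-2a_2+a_3=0$ by the structure coming from $T_1a_1+T_2a_2+T_3a_3=0$. Everything runs through a density-increment strategy on Bohr sets, so the first step is to recast the relevant Fourier-analytic quantities for the twisted equation. Concretely, for a set $A\subset G$ of density $\alpha$ I would study the counting operator $N(A)=\sum_{a_1,a_2,a_3}\ind{A}(a_1)\ind{A}(a_2)\ind{A}(a_3)\ind{T_1a_1+T_2a_2+T_3a_3=0}$, expand it on the Fourier side, and observe that because each $T_i$ is an automorphism, the dual map $\widehat{T_i}$ (the transpose acting on $\widehat{G}$) is also an automorphism; hence $\mh{A}(\widehat{T_i}\xi)$ ranges over all of $\widehat{A}$ as $\xi$ does, and the trivial solutions contribute $|A|$. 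So if $A$ has no non-trivial solution we get, exactly as in the classical case, a large $L^\infty$ mass for $\mh{A}$ on some twisted combination of frequencies — i.e.\ there is a non-trivial $\xi$ with $|\mh{A}(\widehat{T_i}\xi)|$ large for some $i$, equivalently a non-trivial character at which $\mh{A}$ is large.

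The second step is to push this single large Fourier coefficient, or more precisely the large-spectrum set $\mathrm{Spec}_\rho(A)$, through the Bloom--Sisask machinery: spectral boosting, the structure of the large spectrum inside Bohr sets, and the key ``almost-periodicity'' input, to produce a Bohr set $B$ of not-too-small rank and radius on which $A$ has density at least $\alpha(1+c')$ for a suitable increment. Here the twists $T_i$ only intervene in how one bookkeeps the Bohr set: one must work with Bohr sets defined by frequency sets that are closed (up to the relevant constants) under the automorphisms $\widehat{T_i}$ and $\widehat{T_i}^{-1}$, since the restriction of equation \eqref{eq:ourequation} to a translate $x+B$ again involves the $T_i$'s. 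Because there are only three fixed automorphisms, passing from a frequency set $\Gamma$ to $\Gamma\cup\widehat{T_1}\Gamma\cup\widehat{T_2}\Gamma\cup\cdots$ at most triples (a bounded number of times) the rank, which is harmless for the final bound. Iterating the density increment $O(\alpha^{-1})$ times — or rather, running the efficient Bloom--Sisask iteration that gains a factor in $\log$ rather than in $\alpha$ at each stage — forces $\alpha \ll (\log|G|)^{-1-c}$ before the Bohr set degenerates, which is the desired conclusion.

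The main obstacle is not conceptual but one of careful verification: the Bloom--Sisask proof is long and every lemma (the spectral Plünnecke-type estimates, the Croot--Sisask almost-periodicity lemma, the ``grid'' and ``Bohr-set'' combinatorics, the construction of the density increment on a sub-Bohr-set) must be checked to survive the insertion of fixed automorphisms $T_i$. The delicate points are (i) making sure that ``restriction to a Bohr set'' interacts correctly with the $T_i$-twist, which requires Bohr sets whose defining frequency set is stable under $\widehat{T_i}^{\pm1}$ as noted above, and (ii) controlling that the extra dilations by $T_i$ do not destroy the regularity of the Bohr sets used — one wants regular Bohr sets, and $T_i$ applied to a regular Bohr set need not be regular, so one instead enlarges the frequency set from the outset and only ever dilates the radius. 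Once these compatibility points are in place, the quantitative bookkeeping is identical to \cite{BS} up to absolute constants, and the exponent $c$ one obtains is the same absolute constant (possibly shrunk by a fixed factor), independent of $G$ and of the $T_i$, as claimed.
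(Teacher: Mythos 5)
There is a genuine gap, and it is precisely at the point you wave away: the Bohr-set rank bookkeeping. You claim that enlarging the frequency set to $\Gamma\cup\widehat{T_1}\Gamma\cup\widehat{T_2}\Gamma\cup\cdots$ "at most triples (a bounded number of times) the rank, which is harmless". But this enlargement is forced at \emph{every} step of the density-increment iteration, not a bounded number of times: the efficient Bloom--Sisask iteration runs for a number of steps that grows like a power of $\alpha^{-1}$ (a small power $\alpha^{-O(\eps(k))}$ in the main argument, and still $\tilde O_\alpha(1)$ steps even for the weak bound), so a factor-of-$3$ loss per step gives rank growth like $3^n d$, which is exponential in the number of steps and destroys any bound of the claimed quality. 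Nor can you escape by closing the frequency set under $\widehat{T_i}^{\pm1}$ "from the outset": the group generated by a single automorphism can be enormous (e.g.\ multiplication by a primitive root $b$ on $\Z/p\Z$ has order $p-1$), so the closure of even one character can have size comparable to $|G|$. This is exactly the obstruction the paper isolates (see \cref{rem:frequencysets}): the fix there is to first reduce to $T_1=\mathrm{Id}_G$ (\cref{rem:identity}), so that $T_2$ and $T_3=-\mathrm{Id}_G-T_2$ and their inverses all commute; then the set of words of length $n$ in these automorphisms has size $O(n^2)$, the frequency set after $n$ steps is contained in $O(n^2)$ images of the original frequency data, and the rank grows only polynomially in the number of steps. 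Making this work requires strengthening the very notion of density increment (\cref{def:increments}) so that the iteration carries along explicit frequency sets and width functions, rather than just lower bounds on $|B''|/|B'|$; without that extra bookkeeping the increments produced by the Bloom--Sisask lemmas cannot be composed with controlled rank. Your sketch contains neither the reduction to commuting automorphisms nor any mechanism replacing this bookkeeping, so as written the iteration does not close.

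Two smaller remarks. First, your worry (ii) is moot: if $B$ is a regular Bohr set and $T$ an automorphism, then $TB$ is again a Bohr set with $(TB)_\rho=TB_\rho$ and is regular, since $|T X|=|X|$ for every $X$; regularity is not where the difficulty lies. Second, the first step of your sketch (a single large Fourier coefficient from the absence of solutions) is only the entry point; the actual argument needs the quantitatively stronger dichotomies (large $L^2$ mass on a spectrum, additively non-smoothing structure, spectral boosting), and while you gesture at these, each of them must be restated with the sets $T_2A_2$, $-T_3A_3$ and intersections such as $B\cap T_2B\cap T_3B$ in place of dilates, which is where the rank problem above enters; it is also what makes the final bound uniform in the coefficients, a uniformity your "identical bookkeeping up to absolute constants" claim does not by itself deliver.
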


The result \cite[Corollary~3.2]{BS} of Bloom and Sisask corresponds to the special case ${T_1 = T_2 = \mathrm{Id}_{G}}$ and $T_3 = -2\, \mathrm{Id}_{G}$ of \cref{thm:main}. Their hypothesis that $G$ has odd order ensures that $-2\, \mathrm{Id}_{G}$ is an automorphism. 

\begin{remark}
\label{rem:translation}
The condition $T_1 + T_2 + T_3 = 0$ ensures that \cref{eq:ourequation} is translation-invariant. It is necessary: for example, if $G = \F_2^n$, ${T_1 = T_2 = T_3 = \mathrm{Id}_{G}}$ and $A$ is the set of vectors with first coordinate equal to $1$, then $|A| \asymp |G|$, yet $A$ has no solutions to \cref{eq:ourequation}.
\end{remark}

We will deduce the following corollary, which generalizes \cite[Corollary~1.2]{BS} to higher dimensions and matrix coefficients. It is also a strengthening of \cite[Theorem~2.21]{Bloomthesis}.

\begin{corollary}
\label{cor:matrices}
Let $M_1, M_2, M_3$ be nonsingular $d\times d$ matrices with integer coefficients such that $M_1 + M_2 + M_3 = 0$. If $A \subset \Z^d$ satisfies
\begin{equation*}
\sum_{a\in A\setminus\{0\}} \frac{1}{\norm{a}^d} = +\infty,
\end{equation*}
then $A$ contains infinitely many non-trivial solutions to the equation $M_1 a_1 + M_2 a_2 + M_3 a_3 = 0$.
\end{corollary}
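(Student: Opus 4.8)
The plan is a dyadic pigeonhole reduction to \cref{thm:main}, modeled on the deduction of \cite[Corollary~1.2]{BS} from the main theorem there. Suppose, for contradiction, that $A$ has only finitely many non-trivial solutions to $M_1a_1 + M_2a_2 + M_3a_3 = 0$. Let $S \subset A$ be the (finite) set of all coordinates appearing in these solutions; then $A' := A \setminus S$ still satisfies $\sum_{a \in A' \setminus \{0\}} \norm{a}^{-d} = +\infty$ (we deleted only finitely many finite terms), and $A'$ has \emph{no} non-trivial solution. Replacing $A$ by $A'$, we may thus assume $A$ itself is solution-free, and it remains to contradict the divergence of $\sum_{a \in A\setminus\{0\}} \norm{a}^{-d}$.

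Decompose dyadically: for $j \geq 0$ set $A_j = \{a \in A : 2^j \leq \norm{a} < 2^{j+1}\}$ (these cover $A\setminus\{0\}$ since every nonzero lattice point has norm $\geq 1$). Since $\norm{a}^{-d} \asymp 2^{-jd}$ for $a \in A_j$, divergence gives $\sum_{j \geq 0} |A_j|\, 2^{-jd} = +\infty$. We will show that $|A_j| \ll 2^{jd}/(j+1)^{1+c}$ for all large $j$, where $c$ is the constant from \cref{thm:main} and the implied constant depends only on $d$ and the matrices $M_i$; summing over $j$ then yields $\sum_j |A_j|\, 2^{-jd} \ll \sum_j (j+1)^{-1-c} < \infty$, using $c>0$, which is the desired contradiction.

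To bound $|A_j|$ we transfer $A_j$ into the group $G_j = (\Z/N_j\Z)^d$ for a carefully chosen modulus $N_j$. Let $D = |\det M_1 \det M_2 \det M_3| \geq 1$. We need $N_j$ to satisfy two requirements: (i) $\gcd(N_j, D) = 1$, which makes each $M_i$ act as an automorphism $T_i$ of $G_j$ (and the relation $M_1 + M_2 + M_3 = 0$ over $\Z$ reduces to the hypothesis $T_1+T_2+T_3=0$ of \cref{thm:main}); and (ii) $N_j \geq C\, 2^j$ for a constant $C = C(d, M_1, M_2, M_3)$, large enough that reduction mod $N_j$ is injective on the box $\{a \in \Z^d : \norm{a} < 2^{j+1}\}$ \emph{and} that, whenever $a_1, a_2, a_3$ lie in that box, $M_1a_1 + M_2a_2 + M_3a_3 \equiv 0 \pmod{N_j}$ forces $M_1a_1 + M_2a_2 + M_3a_3 = 0$ in $\Z^d$ (no wraparound, since each coordinate of the left-hand side is $\ll 2^j < N_j$). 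Both can be arranged at once, e.g.\ by taking $N_j$ to be the least integer $\geq C\,2^j$ coprime to $D$, so that $C\,2^j \leq N_j \leq C\,2^j + D$ and hence $N_j \asymp 2^j$; alternatively, by Bertrand's postulate one may take $N_j$ to be a prime in $(C\,2^j,\, 2C\,2^j]$, which for $j$ large exceeds $D$ and is thus automatically coprime to it.

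With this $N_j$, let $\overline{A_j} \subset G_j$ be the image of $A_j$ under reduction. By (ii) reduction is injective on $A_j$, so $|\overline{A_j}| = |A_j|$; and any solution of $T_1\bar a_1 + T_2 \bar a_2 + T_3 \bar a_3 = 0$ in $\overline{A_j}$ lifts, via representatives in $A_j$, to a genuine solution in $A$, which is trivial by assumption, hence was already trivial in $G_j$. Thus $\overline{A_j}$ is solution-free in $G_j$, and \cref{thm:main} yields $|A_j| = |\overline{A_j}| \ll |G_j|/(\log|G_j|)^{1+c} = N_j^d/(d\log N_j)^{1+c} \ll 2^{jd}/(j+1)^{1+c}$ for $j$ large, as claimed. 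The only step genuinely requiring care is the choice of $N_j$: it must be coprime to $\det M_1\det M_2\det M_3$ \emph{and} of the right size both to separate the points of $A_j$ and to prevent the equation from wrapping around modulo $N_j$; everything else is bookkeeping with constants depending only on $d$ and the $M_i$.
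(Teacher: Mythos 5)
Your proposal is correct and follows essentially the same route as the paper: reduce to a solution-free set, bound each bounded piece of $A$ by embedding it into $(\Z/N\Z)^d$ with $N \asymp$ the scale, chosen coprime to the determinants and large enough that the equation cannot wrap around, apply \cref{thm:main}, and sum the resulting bounds. The only differences (dyadic annuli with a direct sum instead of truncations with partial summation, and allowing a composite modulus coprime to $\det M_1\det M_2\det M_3$ rather than a prime from Bertrand's postulate) are cosmetic.
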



Using \cref{cor:matrices}, we are able to prove a conjecture of Shkredov and Solymosi \cite[Conjecture~2]{SS}.

\begin{example}
\label{ex:IRtriangle}
If a subset $A$ of the square lattice satisfies $\sum_{a\in A\setminus\{0\}} {1}/{\norm{a}^2} = +\infty$, then there are infinitely many isosceles right triangles whose vertices are in $A$.
\end{example}

We also obtain the following aesthetic result.

\begin{example}
\label{ex:EQtriangle}
If a subset $A$ of the hexagonal lattice satisfies $\sum_{a\in A\setminus\{0\}} {1}/{\norm{a}^2} = +\infty$, then  $A$ contains infinitely many equilateral triangles.
\end{example}

\Cref{ex:EQtriangle,ex:IRtriangle} are special cases of the following corollary.


\begin{corollary}
\label{cor:generaltriangles}
Let $\Lambda \subset \C$ be a lattice of the form $\Lambda = \omega_1 \Z \oplus \omega_2 \Z$, such that $\omega_i \Lambda \subset \Lambda$ for $i=1,2$.
Let $T$ be any triangle with vertices in $\Lambda$. If $A \subset \Lambda$ is such that 
\begin{equation*}
\sum_{a\in A\setminus\{0\}} \frac{1}{|a|^2} = +\infty,
\end{equation*}
then there are infinitely many triangles, with vertices in $A$, which are directly similar\footnote{Two triangles are directly similar if there is an orientation-preserving similitude of the plane mapping one to the other.} to $T$.
\end{corollary}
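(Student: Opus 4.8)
The plan is to re-encode the relation ``directly similar to $T$'' as a single linear equation $M_1a_1+M_2a_2+M_3a_3=0$ with nonsingular integer matrices $M_i$ summing to $0$, and then to apply \cref{cor:matrices} with $d=2$.

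The structural input is that the hypothesis $\omega_i\Lambda\subset\Lambda$ for $i=1,2$ makes $\Lambda$ closed under multiplication: writing $\mu\in\Lambda$ as $\mu=m_1\omega_1+m_2\omega_2$ with $m_1,m_2\in\Z$ gives $\mu\lambda=m_1(\omega_1\lambda)+m_2(\omega_2\lambda)\in\Lambda$ for every $\lambda\in\Lambda$. Hence, for each $\lambda\in\Lambda$, multiplication by $\lambda$ is a $\Z$-linear self-map of the rank-$2$ lattice $\Lambda$, and is thus given, in the basis $(\omega_1,\omega_2)$, by an integer $2\times 2$ matrix $M_\lambda$; one has $M_{\lambda+\mu}=M_\lambda+M_\mu$, and $\det M_\lambda=|\lambda|^2$, so $M_\lambda$ is nonsingular whenever $\lambda\neq 0$. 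Let $\phi\colon\Lambda\xrightarrow{\ \sim\ }\Z^2$ be the coordinate isomorphism in this basis, so that $\phi(\lambda x)=M_\lambda\,\phi(x)$ for all $\lambda,x\in\Lambda$.

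Write $T=\{v_1,v_2,v_3\}$ with $v_i\in\Lambda$, and put $u:=v_2-v_1$ and $w:=v_3-v_1$; since $T$ is non-degenerate, $u$, $w$ and $u-w$ are nonzero elements of $\Lambda$. An ordered triple $(a_1,a_2,a_3)$ of complex numbers is directly similar to $(v_1,v_2,v_3)$ precisely when $(a_2-a_1)w=(a_3-a_1)u$, that is, when
\[
(u-w)\,a_1+w\,a_2-u\,a_3=0 .
\]
Setting $M_1:=M_{u-w}$, $M_2:=M_w$, $M_3:=M_{-u}$ --- nonsingular integer matrices with $M_1+M_2+M_3=0$ --- and applying $\phi$, this becomes $M_1\phi(a_1)+M_2\phi(a_2)+M_3\phi(a_3)=0$ in $\Z^2$. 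Now let $A':=\phi(A)$. Since $\phi$ extends to an $\R$-linear isomorphism $\C\to\R^2$ and any two norms on $\R^2$ are equivalent, $\norm{\phi(a)}\asymp|a|$ with constants depending only on $\omega_1,\omega_2$, so $\sum_{a'\in A'\setminus\{0\}}1/\norm{a'}^2=+\infty$; \cref{cor:matrices} then yields infinitely many non-trivial solutions of $M_1a'_1+M_2a'_2+M_3a'_3=0$ in $A'$.

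Pulling these back through $\phi^{-1}$ gives infinitely many non-trivial triples $(a_1,a_2,a_3)\in A^3$ with $(u-w)a_1+w\,a_2-u\,a_3=0$. A short case check shows that such a triple has $a_1,a_2,a_3$ pairwise distinct: if two coincided, the equation together with $u,w,u-w\neq 0$ would force the third to coincide, contradicting non-triviality. Hence each such triple spans a genuine triangle directly similar to $T$, and since a fixed unordered triangle underlies at most $3!=6$ ordered triples, $A$ contains infinitely many triangles directly similar to $T$. \Cref{ex:IRtriangle,ex:EQtriangle} then follow by taking $\Lambda$ to be the square lattice $\Z[i]$, respectively the hexagonal lattice $\Z[e^{i\pi/3}]$ --- for both of which $\omega_2\Lambda=\Lambda$ --- with $T$ an isosceles right, respectively equilateral, triangle with vertices in the lattice. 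The two points needing a moment's thought are the observation that $\Lambda$ is multiplicatively closed, without which the matrix encoding would not have integer entries, and the non-degeneracy check above, where the nonsingularity of the $M_i$ is exactly what guarantees that solutions of the matrix equation are true triangles; everything else is bookkeeping.
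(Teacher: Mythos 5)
Your proposal is correct and follows essentially the same route as the paper: encode direct similarity to $T$ as the single complex-linear equation with coefficients given by differences of the vertices, realize those coefficients as nonsingular integer $2\times 2$ matrices (multiplication maps in the basis $(\omega_1,\omega_2)$, using $\omega_i\Lambda\subset\Lambda$), and apply \cref{cor:matrices}. The only differences are cosmetic: you make explicit the norm-equivalence transfer of the divergence condition and the check that non-trivial solutions are pairwise distinct, points the paper leaves implicit.
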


\begin{proof}
The orientation-preserving similitudes of the plane are exactly the transformations of the form $z \mapsto uz+v$ with $u, v\in \C$, $u\neq 0$. Let $p_1, p_2$ and $p_3$ be the (distinct) vertices of~$T$. 

Finding triangles in $A$ that are directly similar to $T$ is equivalent to solving the system of equations 
\begin{equation*}
\left\{
\begin{array}{c}
up_1+v = a_1\\
up_2+v = a_2\\
up_3+v = a_3
\end{array} \right.
\end{equation*}
for $u\in \C\setminus \{0\}$, $v\in \C$ and $a_1, a_2, a_3\in A$. This system is equivalent to the single equation
\begin{equation*}
(p_3-p_2)a_1 + (p_1-p_3)a_2 + (p_2-p_1)a_3 = 0,
\end{equation*}
to be solved for distinct $a_1, a_2, a_3\in A$. 

Define $M_1$, $M_2$ and $M_3$ to be the matrices corresponding to multiplication by $p_3-p_2$, $p_1-p_3$ and $p_2-p_1$ in the $\Z$-basis $(\omega_1, \omega_2)$ of $\Lambda$. These matrices sum to zero, are nonsingular as the $p_i$'s are distinct, and have integer coefficients since $p_i\omega_j\in  \Lambda$ for all $i,j$. We conclude by \cref{cor:matrices}.
\end{proof}

%
%
%
%
%

\begin{remark}
It is believed that \cref{ex:IRtriangle} can be extended significantly: a conjecture of Graham states that, if $A\subset \Z^2$ is such that $\sum_{a\in A\setminus\{0\}} {1}/{\norm{a}^2} = +\infty$, then $A$ contains infinitely many axes-parallel squares \cite[Conjecture~8.4.6]{Gr}. The difficulty of Graham's conjecture is comparable to that of Erd\H{o}s' conjecture on arithmetic progressions of length $k=4$.
\end{remark}


\textbf{Overview of the paper.} In \cref{sec:matrix}, we will show how \cref{cor:matrices} follows from \cref{thm:main}. The rest of the paper will be devoted to the proof of \cref{thm:main}. 

Our proof is an adaptation of the work of Bloom and Sisask on three-term arithmetic progressions~\cite{BS}. We will use the same notation as in their paper. We will recall some of it in \cref{sec:notation}, where we also restate some classical lemmas that will be used throughout the proof. The more technical definitions, such as those of additively non-smoothing sets or of additive frameworks, can be found in \cite{BS}. 

The structure of the proof of \cref{thm:main} is shown in \cref{fig:dependency}. \Cref{sec:weaker} is dedicated to the proof of \cref{thm:weakerbound}, a result which by itself is sufficient to prove a weaker version of \cref{thm:main}, with the bound ${|G|}/{(\log |G|)^{1-o(1)}}$ instead. The proof of \cref{thm:weakerbound} is similar to that of \cref{thm:main}, but is considerably simpler. It uses a density increment lemma from \cite{Bloomthesis}. 

In \cref{sec:pfmain}, we prove \cref{thm:main} by adapting the work of Bloom and Sisask \cite{BS} to our more general setting. Fortunately, large portions of their paper can be used as a black box, without any modification. This is especially the case for \cite[Sections~9~and~10]{BS} (structure theorem for additively non-smoothing sets), as well as \cite[Section~11]{BS} (spectral boosting). We will mostly need to adapt some results from \cite[Sections 5,~8~and~12]{BS}.

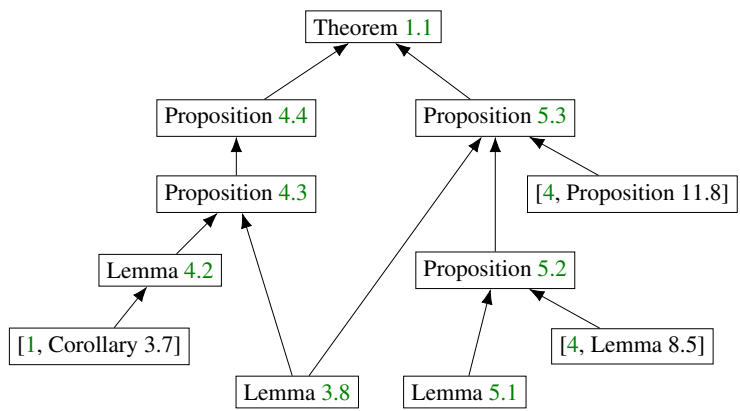
\begin{figure}[H]
\centering
\scalebox{0.81}{
\begin{tikzpicture}[scale=1]
		\node [draw] (0) at (-2.25, 3.5) {\cref{thm:weakerbound}}; 
		\node [draw] (1) at (-2.25, 2.25) {\cref{prop:weakerincrement}}; 
		\node [draw] (3) at (2, 3.5) {\cref{prop:incrementOrSolutions}}; 
		\node [draw] (4) at (-1.25, -1) {\cref{lem:bourgain}};
		\node [draw] (5) at (-3.5, 1) {\cref{cor:thesis}}; 
		\node [draw] (6) at (2, 1) {\cref{prop:incrementOrANS}}; 
		\node [draw] (7) at (4.25, -0.25) {\cite[Lemma 8.5]{BS}};
		\node [draw] (8) at (4.25, 2.25) {\cite[Proposition 11.8]{BS}};
		\node [draw] (9) at (1.5, -1) {\cref{lem:progressions}};  
		\node [draw] (10) at (0, 5) {\cref{thm:main}}; 
		
		\node [draw] (11) at (-4.5, -0.25) {\cite[Corollary~3.7]{Bloomthesis}};
		\draw [-{Latex[length=2.5mm]}] (11) to (5);
		
		\draw [-{Latex[length=2.5mm]}] (1) to (0);
		\draw [-{Latex[length=2.5mm]}] (5) to (1);
		\draw [-{Latex[length=2.5mm]}] (4) to (1);
		\draw [-{Latex[length=2.5mm]}] (4) to (3);
		\draw [-{Latex[length=2.5mm]}] (8) to (3);
		\draw [-{Latex[length=2.5mm]}] (6) to (3);
		\draw [-{Latex[length=2.5mm]}] (7) to (6);
		\draw [-{Latex[length=2.5mm]}] (9) to (6);
		\draw [-{Latex[length=2.5mm]}] (0) to (10);
		\draw [-{Latex[length=2.5mm]}] (3) to (10);
\end{tikzpicture}}
\caption{Dependency graph for the proof of \cref{thm:main} (only the main lemmas and propositions are shown).}
\label{fig:dependency}
\end{figure}

\smallbreak
\textbf{Comparison with the Bloom-Sisask proof.}
We strongly recommend the readers to familiarize themselves with the article of Bloom and Sisask before reading \cref{sec:notation,sec:weaker,sec:pfmain} of this paper. We have attempted to make as few changes to their proof as possible, to make the comparison easier for the reader. 

The proof of Bloom and Sisask can be immediately generalised to equations as in \cref{thm:main} for automorphisms that are multiples of the identity. If $T = n\, \mathrm{Id}_G$ and $B$ is a Bohr set, then the dilate $B_{\rho}$ is a subset of both $B$ and $T^{-1}B$, provided that $\rho \leq 1/n$ (see \cref{sec:notation} for the relevant definitions). This very useful property no longer holds for general automorphisms. 

Instead of considering a simple dilate $B_{\rho}$, we will need to work with the intersection $B \cap T^{-1}B$. The dilate of a Bohr set is another Bohr set of the same rank. By contrast, $B \cap T^{-1}B$ is still a Bohr set, but the rank may have doubled! Controlling the rank of these repeated intersections is the main additional difficulty. To overcome it, we need to keep track more explicitly of the frequency sets of all the Bohr sets in the proof.

Carefully tracking the dependence on the coefficients allows us to show that the rank of the successive Bohr sets in the density increment iteration grows polynomially. To obtain this, we also need to assume that the automorphisms $T_i$ commute (see \cref{rem:frequencysets} for more details). Since three-term equations always reduce to the case of commuting automorphisms (see \cref{rem:identity}), there is no commutativity assumption in \cref{thm:main}.

\Cref{thm:main} gives a bound to subsets of $\Z/N\Z$ without solutions to $ax+by+cz=0$ for integers $a,b,c$ comprime to $N$ with $a+b+c = 0$. It is important to note that this bound is \emph{uniform} in $a,b,c$. Such uniformity would not have been obtained through a `naive' modification of the Bloom-Sisask proof using dilates as above.

\begin{remark}
\label{rem:moreterms}
\Cref{thm:main} can be generalized to equations with more than three terms. More precisely, a slight adaptation of the proof shows the following. If $T_1, T_2, \dots, T_k$ are commuting automorphisms of an abelian group $G$ such that $T_1 + T_2 + \dots + T_k = 0$, then any set $A \subset G$ without non-trivial solutions to $T_1a_1 + T_2a_2 + \dots T_ka_k = 0$ satisfies 
\begin{equation*}
|A| \ll \frac{|G|}{(\log|G|)^{1+c}}
\end{equation*}
where $c > 0$ is an absolute constant. \Cref{cor:matrices} can also be modified in a similar way. However, for $k \geq 4$, considerably better bounds are available using other methods (see~\cite{schoen}), which is why we restrict ourselves to the case $k=3$.
\end{remark}

\section{Application to Matrix Coefficients}
\label{sec:matrix}

In this section, we show how \cref{cor:matrices} follows from \cref{thm:main}. The proof is standard and involves two steps: first truncating the set $A$, then embedding this truncation of $A$ inside a finite abelian group.

\begin{proof}[Proof of \cref{cor:matrices}]
Let $A$ be a subset of $\Z^d$ containing only finitely many non-trivial solutions to the equation 
\begin{equation}
\label{eq:progressions}
M_1 a_1 + M_2 a_2  + M_3 a_3 = 0.
\end{equation}
We want to prove that
\begin{equation}
\label{eq:finitesum}
\sum_{a\in A\setminus\{0\}} \frac{1}{\norm{a}^d} < +\infty.
\end{equation}
After removing a finite number of elements from $A$, we can assume that $A$ has no non-trivial solution to \cref{eq:progressions}.

For $T \geq 1$, let $A_T$  be the truncated set 
\begin{equation*}
A_T := A \cap [-T, T]^d.
\end{equation*}
It is sufficient to prove that, for all $T\geq 3$,
\begin{equation}
\label{eq:truncatedbound}
|A_T| \ll \frac{T^d}{(\log{T})^{1+c}},
\end{equation}
where $c > 0$ is the constant from \cref{thm:main}.\footnote{In this proof, the implied constants in the asymptotic notation $\ll$ and $\asymp$ depend only on the dimension $d$ and the matrices $M_1, M_2, M_3$.}
Indeed, we have
\begin{equation*}
\sum_{\substack{a\in A\setminus \{0\}\\ \norm{a}_{\infty} \leq M}} \frac{1}{\norm{a}^d} \asymp \sum_{N=1}^{M} \frac{1}{N^d}\cdot \# \{a\in A : \norm{a}_{\infty} = N\},
\end{equation*}
and by partial summation, together with \cref{eq:truncatedbound}, we get
\begin{equation*}
\sum_{\substack{a\in A\setminus \{0\}\\ \norm{a}_{\infty} \leq M}} \frac{1}{\norm{a}^d} \asymp \frac{|A_M|}{M^d} +  \int_{1}^{M} \frac{|A_T|}{T^{d+1}} \,\mathrm{d}T \ll 1 + \int_2^M \frac{1}{T (\log{T})^{1+c}} \,\mathrm{d}T \ll 1.
\end{equation*}
Taking $M \to +\infty$ proves \cref{eq:finitesum}.

Let $T\geq 3$. Let 
\begin{equation*}
C = \max\big(\opnorm{M_1}, \opnorm{M_2}, \opnorm{M_3}, \lvert \det M_1\rvert, \lvert\det M_2\rvert, \lvert\det M_3\rvert\big),
\end{equation*}
where $\opnorm{M_i}$ is the operator norm of the matrix $M_i$, viewed as a map $(\R^d, \norm{\cdot}_{\infty}) \to (\R^d, \norm{\cdot}_{\infty})$. Let $p$ be a prime number between $4CT$ and $8 CT$, which exists by Bertrand's postulate.

We embed $A_T$ in the abelian group $(\Z/p\Z)^d$. Let $\overline{A_T}$, $\overline{M_1}, \overline{M_2}$ and $\overline{M_3}$ be the reductions of $A_T, M_1, M_2$ and $M_3$ modulo $p$. Clearly, each $\overline{M_i}$ is invertible as its determinant is not divisible by~$p$.

We claim that the map
\begin{equation*}
\{(a_1, a_2, a_3) \in (A_T)^3 : M_1a_1 + M_2a_2 + M_3a_3 = 0\} \to \{(x_1, x_2, x_3) \in (\overline{A_T})^3 : \overline{M_1}x_1 + \overline{M_2}x_2 + \overline{M_3}x_3 = 0\} 
\end{equation*}
given by reduction modulo $p$ is surjective. Indeed, if $(a_1, a_2, a_3)\in (A_T)^3$ is such that 
\begin{equation*}
M_1a_1 + M_2a_2 + M_3a_3 \equiv 0 \pmod{p},
\end{equation*}
then $M_1a_1 + M_2a_2 + M_3a_3 = 0$ in $\R^d$ since we also have
\begin{equation*}
\norm{M_1a_1 + M_2a_2 + M_3a_3}_{\infty} \leq 3CT < p.
\end{equation*}
It follows that $\overline{A_T}$ only has trivial solutions to the equation $\overline{M_1}x_1 + \overline{M_2}x_2 + \overline{M_3}x_3 = 0$. By \cref{thm:main}, we obtain
\begin{equation*}
|A_T| = |\overline{A_T}| \ll \frac{p^d}{(\log p^d)^{1+c}} \asymp \frac{T^d}{(\log T)^{1+c}},
\end{equation*}
which proves \cref{eq:truncatedbound} and concludes the proof of \cref{cor:matrices}.
\end{proof}

\section{Notation and New Density Increments}
\label{sec:notation}

We use the same notation as in the paper of Bloom and Sisask \cite{BS}. We recall some of it below, but we encourage the readers to familiarize themselves with their article before reading the rest of this paper.

\begin{notation}
Fix a finite abelian group $G$. If $A\subset B$, the \emph{relative density} of $A$ in $B$ is the ratio $|A|/|B|$. If $X\subset G$, the density of $X$ in $G$ is denoted by $\mu(X) := |X|/|G|$. We write $\m{X}$ for the normalized indicator function $\m{X} = \mu(X)^{-1}\ind{X}$. 

For $f, g:G\to \C$, we use the normalizations 
\begin{equation*}
\inner{f}{g} := \frac{1}{|G|} \sum_{x\in G} f(x) \overline{g(x)} \quad \text{and}\quad f * g (x) := \frac{1}{|G|} \sum_{y\in G} f(y) {g(x-y)},
\end{equation*}
while for $f, g:\widehat{G}\to \C$, we set
\begin{equation*}
\inner{f}{g} := \sum_{\gamma\in \widehat{G}} f(\gamma) \overline{g(\gamma)}\quad\text{and}\quad f * g (x) :=\sum_{y\in \widehat{G}} f(y) {g(x-y)}.
\end{equation*}

In order to suppress logarithmic factors, we use the notation $X \leqs{\alpha} Y$ or $X = \tilde{O}_{\alpha}(Y)$ to mean that $|X| \leq C_1 \log(2/\alpha)^{C_2}Y$ for some constants $C_1, C_2 > 0$. 
\end{notation}

\begin{notation}[Bohr sets]
\label{not:bohr}
For $\Gamma\subset \widehat{G}$ and $\nu : \Gamma \to [0, 2]$, we define the \emph{Bohr set} $B = \mathrm{Bohr}_{\nu}(\Gamma)$ to be the subset of $G$ defined by
\begin{equation*}
\mathrm{Bohr}_{\nu}(\Gamma) = \{x\in G: |1-\gamma(x)| \leq \nu(\gamma) \text{ for all }\gamma \in \Gamma\}.
\end{equation*}
The set $\Gamma$ is called the \emph{frequency set} of $B$ and $\nu$ its \emph{width function}. The \emph{rank} of $B$, denoted by $\mathrm{rk}(B)$, is defined to be the size of $\Gamma$. Note that all Bohr sets are symmetric. 

When we speak of a Bohr set, we implicitly refer to the triple $(\mathrm{Bohr}_\nu(\Gamma), \Gamma, \nu)$, since the Bohr set $\mathrm{Bohr}_\nu(\Gamma)$ alone does not uniquely determine the frequency set nor the width.

The intersection of two Bohr sets is again a Bohr set. If $B = \mathrm{Bohr}_\nu(\Gamma)$ and $\rho > 0$, we denote by $B_{\rho}$ the \emph{dilate} of $B$, i.e.~the Bohr set given by $B_{\rho} := \mathrm{Bohr}_{\rho\nu}(\Gamma)$.

A Bohr set $B$ of rank $d$ is \emph{regular} if for all $|\kappa| \leq 1/(100d)$, we have 
\begin{equation*}
(1-100d|\kappa|) |B| \leq |B_{1+\kappa}| \leq (1+100d|\kappa|) |B|.
\end{equation*}
An important property is that, for every Bohr set $B$, there is a dilate $B_{\rho}$, for some $\rho \in [1/2, 1]$, which is regular (see \cite[Lemma~4.3]{BS}). 

If $B$ is a Bohr set and $T$ is an automorphism, then $TB$ is a Bohr set and $(TB)_{\rho} = TB_{\rho}$. If $B$ is regular, then so too is $TB$.
\end{notation}

The sizes of Bohr sets can be controlled using the classical lemma \cite[Lemma~4.4]{BS}. We restate it below as it will be used extensively throughout the article.

\begin{lemma}
\label{lem:boundbohr}
Let $\Gamma \subset \widehat{G}$ and $\nu, \nu' : \Gamma \to [0, 2]$ be such that $\nu'(\gamma) \leq \nu(\gamma)$ for $\gamma\in \Gamma$. We have 
\begin{equation*}
|\mathrm{Bohr}_{\nu'}(\Gamma)| \geq \Bigg(\prod_{\gamma\in \Gamma} \frac{\nu'(\gamma)}{4\nu(\gamma)}\Bigg) |\mathrm{Bohr}_{\nu}(\Gamma)|.
\end{equation*}
In particular, if $\rho \in (0, 1)$ and $B$ is a Bohr set of rank $d$, then $|B_{\rho}| \geq (\rho/4)^d |B|$.
\end{lemma}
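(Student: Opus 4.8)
The plan is to prove the displayed inequality by a direct covering argument: view each character as a homomorphism into the circle, so that a Bohr set becomes the preimage of a product of arcs, and then cover the larger arcs by smaller ones. The ``in particular'' statement will then follow by taking $\nu'=\rho\nu$. I may assume $\nu'(\gamma)>0$ for every $\gamma\in\Gamma$, since otherwise the product on the right vanishes and the inequality is trivial; in that case $\nu(\gamma)\ge\nu'(\gamma)>0$ as well.

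First I would set up the arc description. For $\gamma\in\widehat G$ write $\gamma(x)=e^{i\theta_\gamma(x)}$, where $\theta_\gamma\colon G\to\R/2\pi\Z$ is a group homomorphism, and let $\lVert\cdot\rVert$ denote the distance to $0$ in $\R/2\pi\Z$ (so $\lVert\cdot\rVert$ takes values in $[0,\pi]$). Since $|1-e^{i\theta}|=2\lvert\sin(\theta/2)\rvert$, one checks that for $t\in[0,2]$ the condition $|1-\gamma(x)|\le t$ is equivalent to $\lVert\theta_\gamma(x)\rVert\le\alpha(t)$, where $\alpha(t):=2\arcsin(t/2)\in[0,\pi]$; note the two-sided bound $t\le\alpha(t)\le\frac\pi2 t$. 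Hence $\mathrm{Bohr}_\nu(\Gamma)=\{x\in G:\lVert\theta_\gamma(x)\rVert\le\alpha(\nu(\gamma))\text{ for all }\gamma\in\Gamma\}$, and likewise for $\nu'$.

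Then, for each $\gamma$, I would cover the arc $\{\theta\in\R/2\pi\Z:\lVert\theta\rVert\le\alpha(\nu(\gamma))\}$, of length $2\alpha(\nu(\gamma))$, by $m_\gamma:=\big\lceil 2\alpha(\nu(\gamma))/\alpha(\nu'(\gamma))\big\rceil$ arcs of the form $\{\theta:\lVert\theta-c_{\gamma,j}\rVert\le\alpha(\nu'(\gamma))/2\}$ with $1\le j\le m_\gamma$. Pulling these back under $x\mapsto(\theta_\gamma(x))_{\gamma\in\Gamma}$ and expanding the product of coverings over $\gamma\in\Gamma$ gives $\mathrm{Bohr}_\nu(\Gamma)\subseteq\bigcup_{\vec\jmath}S_{\vec\jmath}$, a union of at most $\prod_{\gamma\in\Gamma}m_\gamma$ pieces $S_{\vec\jmath}=\{x:\lVert\theta_\gamma(x)-c_{\gamma,j_\gamma}\rVert\le\alpha(\nu'(\gamma))/2\text{ for all }\gamma\}$. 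For each nonempty $S_{\vec\jmath}$ I would fix a point $x_0\in S_{\vec\jmath}$; then for every $x\in S_{\vec\jmath}$ the triangle inequality in $\R/2\pi\Z$ and the homomorphism property give $\lVert\theta_\gamma(x-x_0)\rVert\le\alpha(\nu'(\gamma))$, whence $|1-\gamma(x-x_0)|\le\nu'(\gamma)$, i.e. $x-x_0\in\mathrm{Bohr}_{\nu'}(\Gamma)$. So each $S_{\vec\jmath}$ lies in a single translate of $\mathrm{Bohr}_{\nu'}(\Gamma)$, and therefore $|\mathrm{Bohr}_\nu(\Gamma)|\le\big(\prod_{\gamma\in\Gamma}m_\gamma\big)\,|\mathrm{Bohr}_{\nu'}(\Gamma)|$.

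Finally I would bound $m_\gamma$. Writing $r:=\nu(\gamma)/\nu'(\gamma)\ge 1$, the estimates $\alpha(\nu(\gamma))\le\frac\pi2\nu(\gamma)$ and $\alpha(\nu'(\gamma))\ge\nu'(\gamma)$ give $2\alpha(\nu(\gamma))/\alpha(\nu'(\gamma))\le\pi r$, and then the elementary bound $\lceil y\rceil\le\max(4,\tfrac54 y)$ for $y\ge 0$, together with $r\ge 1$ and $\tfrac{5\pi}{4}<4$, yields $m_\gamma\le\max(4,\tfrac{5\pi}{4}r)\le 4r=4\nu(\gamma)/\nu'(\gamma)$. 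Substituting proves the stated inequality, and the ``in particular'' part is the case $\nu'=\rho\nu$ (legitimate since $0<\rho<1$ forces $0\le\rho\nu(\gamma)\le\nu(\gamma)\le 2$), giving $|B_\rho|\ge\prod_{\gamma\in\Gamma}(\rho/4)\cdot|B|=(\rho/4)^d|B|$ with $d=\mathrm{rk}(B)$. There is no genuine obstacle here — this is a routine covering/packing argument — the only point requiring care being the trigonometric dictionary between the width $\nu(\gamma)$ and the length $\alpha(\nu(\gamma))$ of the corresponding sub-arc of the circle (in particular the two-sided bound $t\le\alpha(t)\le\frac\pi2 t$) and the rounding in $m_\gamma$, which must be handled with a little attention to land exactly on the constant $4$.
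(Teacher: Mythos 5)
Your covering argument is correct: the dictionary between $|1-\gamma(x)|\le t$ and the arc condition $\lVert\theta_\gamma(x)\rVert\le 2\arcsin(t/2)$, the two-sided bound $t\le\alpha(t)\le\tfrac{\pi}{2}t$, the count $m_\gamma=\lceil 2\alpha(\nu(\gamma))/\alpha(\nu'(\gamma))\rceil\le 4\nu(\gamma)/\nu'(\gamma)$, and the translate argument all check out, and the dilate statement follows as you say. The paper does not reprove this lemma but simply restates the classical \cite[Lemma~4.4]{BS}, and your proof is essentially the standard covering-by-translates argument behind that result, so the approaches coincide.
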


\begin{remark}
\label{rem:frequencysets}
One of the main difficulties that arise when working with general automorphisms $T_i$ is that we often have to control intersections of Bohr sets such as $B' = T_1B \cap T_2B \cap T_3B$. If the Bohr set $B$ has frequency set $\Gamma = \{\gamma_i \mid i\in I\}$, then $B'$ can be viewed as a Bohr set with frequency set $\Gamma' = \{\gamma_i \circ T_j^{-1}\mid i\in I, j\in \{1,2,3\}\}$. If we don't know anything about the frequency set of $B$, then the best we can say about the rank of $B'$ is that 
\begin{equation*}
\mathrm{rk}(B') \leq 3\,\mathrm{rk}(B).
\end{equation*}
Suppose that $B_0$ is a Bohr set of rank $d$ and define, for $n\geq 0$, $B_{n+1} = T_1B_n \cap T_2B_n \cap T_3B_n$. Using the above bound would give an exponential growth for the ranks of these Bohr sets. Such a naive bound would be completely insufficient to prove \cref{thm:main}. However, we can note that 
\begin{equation*}
B_n = \bigcap_{T\in W_n} TB_0,
\end{equation*}
where $W_n$ is the set of all compositions of $n$ automorphisms from the set $\{T_1, T_2, T_3\}$. If we know $T_1$, $T_2$ and $T_3$ commute, then $|W_n|$ has polynomial growth and we can obtain an acceptable bound for the rank of $B_n$, namely 
\begin{equation*}
\mathrm{rk}(B_n) \ll n^2\,\mathrm{rk}(B_0).
\end{equation*}
In the density increment argument, we will need to be more explicit with the definitions of the Bohr sets, in order to carefully keep track of their ranks and frequency sets.
\end{remark}

In the light of \cref{rem:frequencysets}, to make the proof of Bloom and Sisask work for general coefficients, we need to change the definition of density increments (\cite[Definition~5.1]{BS}). 

\begin{definition}[Increments]
\label{def:increments}
Let $B$ be a regular Bohr set, and let $B' \subset B$ be a regular Bohr set of rank $d$. We say that $A\subset B$ of relative density $\alpha$ has a \emph{density increment} of strength $[\delta, d'; C]$ relative to $B'$ if there is a regular Bohr set $B''$ of the form 
\begin{equation*}
B'' = B_{\rho}' \cap \widetilde{B}
\end{equation*}
such that 
\begin{equation*}
\norm{\ind{A} * \m{B''}}_{\infty} \geq (1+C^{-1}\delta) \alpha,
\end{equation*}
where $\widetilde{B} = \mathrm{Bohr}(\widetilde{\Gamma}, \widetilde{\nu})$ is a Bohr set of rank $|\widetilde{\Gamma}| \leq C d'$, $\rho \in (0, 1]$, and $\rho, \widetilde{\nu}$ satisfy the inequality
\begin{equation}
\label{eq:defsizebound}
\left(\frac{\rho}{4}\right)^d \prod_{\gamma \in \widetilde{\Gamma}} \frac{\widetilde{\nu}(\gamma)}{8} \geq (2d(d'+1))^{-C(d+d')}.
\end{equation}
\end{definition}

\begin{remark}
\label{rem:equivdensity}
If $A\subset B$ has a density increment of strength $[\delta, d'; C]$ with respect to $B'$ in the sense of \cref{def:increments}, then $A$ has a density increment of the same strength with respect to $B'$ in the sense of \cite[Definition~5.1]{BS}. This is because \cref{eq:defsizebound} implies the bound
\begin{equation}
\label{eq:formerdefsizebound}
|B''| \geq (2d(d'+1))^{-C(d+d')} |B'|,
\end{equation}
by a direct application of \cref{lem:boundbohr}.

The converse is not true in general, but it is true for all the density increments present in \cite{BS}. That is, every density increment in \cite{BS} is also a density increment in the sense of \cref{def:increments}, of the same strength. The reason is that
\begin{enumerate}
	\item the Bohr set $B''$ in \cite[Definition~5.1]{BS} is always chosen to be of the form $B'' = B_{\rho}' \cap \widetilde{B}$ in \cite{BS},
	\item and every time the authors show that some set $A\subset B$ has a density increment, they need to prove \cref{eq:formerdefsizebound}. To do this, the only tool they use is \cref{lem:boundbohr}, and thus they prove the stronger \cref{eq:defsizebound}.
\end{enumerate}
\end{remark}

We restate here \cite[Lemma~5.2]{BS}, which is an easy consequence of the definition of density increment.

\begin{lemma}
\label{lem:relativeincrements}
Let $B$ be a regular Bohr set and $B'\subset B$ be a regular Bohr set of rank $d$. Let $\rho \in (0, 1)$. If $A\subset B$ has a density increment of strength $[\delta, d'; C]$ relative to $B'_{\rho/d}$, then $A$ has a density increment of strength $[\delta, d'; C+\tilde{O}_{\rho}(1)]$ relative to $B'$.
\end{lemma}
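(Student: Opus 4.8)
The plan is to unwind the definitions and pass the density increment through the inclusion $B'_{\rho/d} \subset B'$. Suppose $A \subset B$ has a density increment of strength $[\delta, d'; C]$ relative to $B'_{\rho/d}$. By \cref{def:increments}, applied to the regular Bohr set $B'_{\rho/d}$ (of some rank $d_0 \leq d$, since dilation does not change the rank), there is a regular Bohr set
\begin{equation*}
B'' = (B'_{\rho/d})_{\sigma} \cap \widetilde{B} = B'_{\sigma\rho/d} \cap \widetilde{B}
\end{equation*}
with $\norm{\ind{A} * \m{B''}}_{\infty} \geq (1 + C^{-1}\delta)\alpha$, where $\widetilde{B}$ has rank at most $Cd'$, $\sigma \in (0,1]$, and the pair $(\sigma, \widetilde{\nu})$ satisfies \cref{eq:defsizebound} with $d$ replaced by $d_0 = \mathrm{rk}(B')$ and with the appropriate width function of $B'$. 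The point is that $B'' $ is already of the form $B'_{\sigma'} \cap \widetilde{B}$ with $\sigma' = \sigma\rho/d \in (0,1]$, so the only thing to check is that this new dilation parameter $\sigma'$ still satisfies the size inequality \cref{eq:defsizebound} relative to $B'$ itself, after absorbing the loss into the constant $C$.

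\textbf{Key steps.} First I would record that $\mathrm{rk}(B'_{\rho/d}) = \mathrm{rk}(B') = d$, so the rank parameter is unchanged. Second, I would compare the two size inequalities: the hypothesis gives, for $B''$ viewed relative to $B'_{\rho/d}$,
\begin{equation*}
\left(\frac{\sigma}{4}\right)^{d} \prod_{\gamma \in \widetilde{\Gamma}} \frac{\widetilde{\nu}(\gamma)}{8} \geq (2d(d'+1))^{-C(d+d')},
\end{equation*}
while relative to $B'$ the dilation parameter becomes $\sigma' = \sigma\rho/d$, so I need
\begin{equation*}
\left(\frac{\sigma\rho/d}{4}\right)^{d} \prod_{\gamma \in \widetilde{\Gamma}} \frac{\widetilde{\nu}(\gamma)}{8} \geq (2d(d'+1))^{-C'(d+d')}
\end{equation*}
for a new constant $C'$. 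The left side of the second inequality is the left side of the first multiplied by $(\rho/d)^{d}$. So it suffices to have $(\rho/d)^d (2d(d'+1))^{-C(d+d')} \geq (2d(d'+1))^{-C'(d+d')}$, i.e. $(d/\rho)^d \leq (2d(d'+1))^{(C'-C)(d+d')}$. Since $d/\rho \leq (2d)/\rho \leq (2d(d'+1))^{O_\rho(1)}$ — here $\log(d/\rho) \ll \log d + \log(1/\rho) \ll (\log(1/\rho))(\log d + 1) \leq \log(2/\alpha)^{O(1)} \cdot \tilde O_\rho(1)$, and more simply $(d/\rho)^d \leq (2d(d'+1))^{d \cdot O_\rho(1)}$ once $2d(d'+1) \geq d/\rho$, which holds for $d$ large and is absorbed otherwise — taking $C' = C + \tilde{O}_\rho(1)$ (with the hidden constant depending polynomially on $\log(1/\rho)$, hence the $\tilde O_\rho$ notation) makes the inequality hold. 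Finally, the width function and frequency set $\widetilde{B}$, and the inequality $\norm{\ind{A}*\m{B''}}_\infty \geq (1+C^{-1}\delta)\alpha \geq (1 + (C')^{-1}\delta)\alpha$ since $C' \geq C$, all carry over verbatim, so $A$ has a density increment of strength $[\delta, d'; C']$ relative to $B'$.

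\textbf{Main obstacle.} The only subtlety is bookkeeping: making sure that the factor $(\rho/d)^d$ lost in passing from $B'_{\rho/d}$ to $B'$ is genuinely of the shape $(2d(d'+1))^{-\tilde O_\rho(d+d')}$ and not something worse. Concretely the worry would be a term like $d^d$, but $d^d \leq (2d(d'+1))^{C'' d}$ for $C''$ absolute once $2d(d'+1) \geq 2d \geq d$, i.e. always (the case of tiny $d$ is trivial), so $d^d$ is absorbed with an absolute constant, and the genuinely $\rho$-dependent loss $(1/\rho)^d \leq (2d(d'+1))^{(\log(1/\rho))(d+d')}$ — using $\log(2d(d'+1)) \geq \log 2 \gg 1$ — is exactly a $\tilde O_\rho(1)$ inflation of the exponent constant. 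This is routine, which is why the statement is labelled an easy consequence.
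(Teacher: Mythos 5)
Your proof is correct and is exactly the argument the paper intends: the lemma is stated there without proof as an ``easy consequence of the definition'' (citing \cite[Lemma~5.2]{BS}), and your unwinding --- noting $(B'_{\rho/d})_{\sigma}=B'_{\sigma\rho/d}$ so the required form in \cref{def:increments} is preserved, and absorbing the loss $(d/\rho)^{d}\leq (2d(d'+1))^{(1+\log(1/\rho)/\log 2)\,d}$ into the constant, i.e.\ $C\mapsto C+\tilde O_{\rho}(1)$ --- is precisely that consequence. The only blemish is the stray $\log(2/\alpha)$ in your parenthetical estimate, which is irrelevant and harmless since the cleaner bound you give immediately afterwards does all the work.
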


Finally, we reproduce the statement of \cite[Lemma~12.1]{BS} for three smaller Bohr sets instead of two. The proof of \cref{lem:bourgain} is the same as that of \cite[Lemma~12.1]{BS}, so we shall not repeat it here.

\begin{lemma}
\label{lem:bourgain}
There is a constant $c>0$ such that the following holds. Let $\B$ be a regular Bohr set of rank $d$, let $\A\subset \B$ have relative density $\alpha$, let $\eps>0$ and suppose that ${B_1, B_2, B_3} \subset \B_\rho$ where $\rho \leq c\alpha \eps/d$. Then either 
\begin{enumerate}
	\item ($\A$ has almost full density on $B_1$, $B_2$ and $B_3$) there is an $x\in \B$ such that 
	\begin{equation*}
	\ind{\A} * \m{B_i} (x) \geq (1-\eps)\alpha
	\end{equation*}
	for $i=1, 2, 3$, or
	\item (density increment) $\A$ has an increment of strength $[\eps, 0; O(1)]$ relative to one of the $B_i$'s.
\end{enumerate}
\end{lemma}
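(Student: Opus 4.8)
The plan is to follow the argument of \cite[Lemma~12.1]{BS} verbatim, the only change being to run the same dichotomy simultaneously on three Bohr sets $B_1, B_2, B_3$ instead of two. Recall the shape of the Bloom--Sisask argument: one considers the function $f = \ind{\A} * \m{\B_\rho}$ (or a suitable average of the $\ind{\A} * \m{B_i}$); regularity of $\B$ together with $\rho \leq c\alpha\eps/d$ guarantees that all of $B_1, B_2, B_3 \subset \B_\rho$ interact almost trivially with $\B$, so that $\ind{\A}*\m{B_i}$ is, up to an error of size $O(\eps\alpha)$, supported on (a slight dilate of) $\B$ and has average close to $\alpha$ there. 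The point is that $\norm{\ind{\A}*\m{B_i} - \ind{\A}}_{L^1(\m{\B})}$ and similar quantities are $O(\eps\alpha)$ by the standard Bohr-set smoothing estimates.

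First I would fix a regular dilate and reduce to the statement that either there is a single point $x \in \B$ at which $\ind{\A}*\m{B_i}(x) \geq (1-\eps)\alpha$ holds \emph{for all three $i$ at once}, or else one of the three convolutions $\ind{\A}*\m{B_i}$ takes a value exceeding $(1+c\eps)\alpha$ somewhere in $\B$ --- the latter immediately gives a density increment of strength $[\eps, 0; O(1)]$ relative to $B_i$, once one checks the size bound \cref{eq:defsizebound} in \cref{def:increments}, which here is trivial since $B'' = B_i$ itself has rank $d' = 0$ contribution and $\rho = 1$, so \cref{eq:defsizebound} reads $(1/4)^{\mathrm{rk}(B_i)} \geq (\cdots)^{-C\,\mathrm{rk}(B_i)}$ and holds for $C$ large. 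So the whole content is the dichotomy for the values of the three convolutions. Suppose the second alternative fails, i.e. $\ind{\A}*\m{B_i}(x) < (1+c\eps)\alpha$ for all $x \in \B$ and all $i$. Then for each $i$, $\int_{\B}(\ind{\A}*\m{B_i})\,\m{\B} \geq \alpha - O(\eps\alpha)$ by the smoothing estimate, while the integrand is bounded above by $(1+c\eps)\alpha$; a reverse-Markov / averaging argument then shows that $\ind{\A}*\m{B_i}(x) \geq (1 - \eps/3)\alpha$ for all $x$ outside a subset of $\B$ of $\m{\B}$-measure at most, say, $1/6$. Intersecting the three exceptional sets (each of measure $\leq 1/6$) leaves a set of measure $\geq 1/2 > 0$ of points $x \in \B$ at which all three inequalities $\ind{\A}*\m{B_i}(x) \geq (1-\eps)\alpha$ hold; pick any such $x$.

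The only genuinely quantitative input is the smoothing estimate $\inner{\ind{\A}*\m{B_i}}{\m{\B}} = \inner{\ind{\A}}{\m{B_i}*\m{\B}} \geq \alpha - O(\eps\alpha)$, which follows because $\m{B_i}*\m{\B}$ is close in $L^\infty$ (relative to $\B$) to $\m{\B}$ when $B_i \subset \B_\rho$ with $\rho$ as small as stated --- this is exactly the computation done in \cite{BS}, using \cref{lem:boundbohr} and regularity of $\B$, and choosing the constant $c$ in the hypothesis $\rho \leq c\alpha\eps/d$ small enough to absorb all the implied constants. I do not expect any real obstacle: passing from two auxiliary Bohr sets to three only costs a union bound over three exceptional sets of small measure, and the arithmetic of $1/6 + 1/6 + 1/6 < 1$ (or any analogous split) goes through unchanged. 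For this reason the paper simply omits the proof, and I would do the same, noting only that the argument of \cite[Lemma~12.1]{BS} applies \emph{mutatis mutandis}, with the measures of the exceptional sets chosen to sum to less than $1$.
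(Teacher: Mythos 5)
Your proposal is correct and is essentially the argument the paper intends: the paper gives no proof, simply noting that the proof of \cite[Lemma~12.1]{BS} carries over verbatim, and your reconstruction (the smoothing estimate $\inner{\ind{\A}*\m{B_i}}{\m{\B}} \geq (1-O(\eps))\alpha$ from regularity and $\rho \leq c\alpha\eps/d$, plus the averaging/reverse-Markov dichotomy against the threshold $(1+c\eps)\alpha$, with a union bound over three small exceptional sets instead of two) is exactly that adaptation. The constants work out as you say, so no further comment is needed.
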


\section{Proof of a Weaker Bound}
\label{sec:weaker}

In this section, we prove \cref{thm:weakerbound}, which can be regarded as a weaker version of \cref{thm:main}. On its own, \cref{thm:weakerbound} is sufficient to prove the bound 
\begin{equation*}
|A| \ll \frac{|G|}{(\log |G|)^{1-o(1)}},
\end{equation*}
keeping the notation of \cref{thm:main}. We will use \Cref{thm:weakerbound} at the end of the proof of \cref{thm:main} when, after a series of density increments, we arrive at a subset $A'$ of a Bohr set $B'$ whose relative density is substantially larger than the original density $|A|/|G|$.

\begin{remark}
\label{rem:identity}
It suffices to prove \cref{thm:main} when the first automorphism $T_1$ is the identity, something which we will assume from this point onward. To deduce the case of a general automorphism $T_1$, simply apply \cref{thm:main} to the set $T_1A$ and the automorphisms $\mathrm{Id_G}, T_2T_1^{-1}, T_3T_1^{-1}$. 
\end{remark}

From now on, fix two automorphisms $T_2$, $T_3$ of $G$ such that $\mathrm{Id}_G + T_2 + T_3 = 0$. We count the number of solutions to the equation $a_1+T_2a_2 + T_3a_3 = 0$ via the inner product
\begin{equation*}
T(A_1, A_2, A_3) := \inner{\ind{A_1} * \ind{T_2A_2}}{\ind{-T_3A_3}},
\end{equation*}
defined for $A_1, A_2, A_3 \subset G$. Observe that
\begin{equation*}
T(A, A, A) = \frac{1}{|G|^2}\cdot \# \{(a_1, a_2, a_3)\in A^3 : a_1+T_2a_2 + T_3a_3 = 0\}.
\end{equation*}

We will obtain \cref{thm:weakerbound} by repeated applications of the following lemma, which is a restatement of \cite[Corollary~3.7]{Bloomthesis} in the language of regular Bohr sets.

\begin{lemma}
\label{cor:thesis}
There is a constant $c\in (0, \tfrac12)$ such that the following holds. Let $0< \alpha\leq 1$. Let~$B$ be a regular Bohr set of rank $d$ and $B'$ a regular Bohr set of rank $\leq 3d$ such that $B' \subset B_{\rho}$, where $\rho = c \alpha/d$. Suppose that $A_1 \subset B$, $A_2\subset T_2^{-1}B'$ and $A_3\subset T_3^{-1}B$, each time with relative density at least $\alpha$. Then 
\begin{enumerate}
	\item either 
\begin{equation*}
T(A_1, A_2, A_3) \gg \alpha^3 \mu(B) \mu(B')
\end{equation*}
	\item or there is a regular Bohr set $B''$ such that $\norm{1_A * \m{B''}}_{\infty} \geq (1+c)\alpha$, where 
	\begin{itemize}
		\item $A$ is either $A_1$ or $-T_3A_3$,
		\item and $B''$ is of the form 
		\begin{equation*}
			B'' = B'_{\eta} \cap \widetilde{B}
		\end{equation*}
		for some $\eta \asymp \exp(-\tilde{O}_{\alpha}(1))/d$ and some $\widetilde{B} = \mathrm{Bohr}(\widetilde{\Gamma}, \widetilde{\nu})$ with $|\widetilde{\Gamma}| \leq d'/4$ and $\widetilde{\nu} \geq 1/d'$ on $\widetilde{\Gamma}$, where $d' = \tilde{O}_{\alpha}(\alpha^{-1})$.
	\end{itemize}
\end{enumerate}
\end{lemma}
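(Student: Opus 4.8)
The plan is to deduce \cref{cor:thesis} from \cite[Corollary~3.7]{Bloomthesis} essentially by translation of vocabulary, the only subtlety being the bookkeeping of frequency sets and the presence of the three distinct automorphisms. First I would recall the statement of \cite[Corollary~3.7]{Bloomthesis}: it is a ``Bourgain-type'' counting lemma asserting that a triple of sets living inside dilates of a common Bohr set either has many solutions to the relevant linear equation, or one of the (outer) sets admits a density increment onto a Bohr set of bounded extra rank. Since $\mathrm{Id}_G + T_2 + T_3 = 0$, the equation $a_1 + T_2 a_2 + T_3 a_3 = 0$ is a genuine translation-invariant three-variable equation, so the counting operator $T(A_1,A_2,A_3) = \inner{\ind{A_1} * \ind{T_2 A_2}}{\ind{-T_3 A_3}}$ is exactly the kind of object the thesis result handles after the harmless substitutions $A_2 \mapsto T_2 A_2$, $A_3 \mapsto -T_3 A_3$ (note that $\mu(T_2 A_2) = \mu(A_2)$ and $\mu(-T_3 A_3) = \mu(A_3)$ since automorphisms preserve density).

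Next I would set up the Bohr sets correctly. We are given $B$ regular of rank $d$ and $B' \subset B_\rho$ regular of rank $\le 3d$ with $\rho = c\alpha/d$. The hypotheses place $A_1 \subset B$, $A_2 \subset T_2^{-1} B'$, $A_3 \subset T_3^{-1} B$. Applying $T_2$ to the second and $-T_3$ to the third, I would work instead with $A_1 \subset B$, $T_2 A_2 \subset B'$, $-T_3 A_3 \subset -T_3 T_3^{-1} B = -T_3 B =: B_3$, where $B_3 = -T_3 B$ is again a regular Bohr set of the same rank $d$ as $B$ (automorphisms preserve regularity and rank, as noted in \cref{not:bohr}). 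The three sets now sit inside the Bohr sets $B$, $B'$, $B_3$ respectively, with $B' \subset B_\rho$; to fit the template of the thesis lemma one takes $\B$ to be a regular dilate of $B \cap B_3$ — or more efficiently runs the argument of \cite[Corollary~3.7]{Bloomthesis} directly with the two outer sets inside $B$ and $B_3$ and the middle set inside the smaller Bohr set $B'$ — and the choice $\rho = c\alpha/d$ is exactly what that argument requires. The dichotomy output is then: either $T(A_1,A_2,A_3) \gg \alpha^3 \mu(B)\mu(B')$ (after undoing the substitutions, which changes nothing since $\mu$ is automorphism-invariant), or one of the outer sets $A \in \{A_1, -T_3 A_3\}$ has $\norm{\ind{A} * \m{B''}}_\infty \ge (1+c)\alpha$ for a Bohr set $B''$.

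Then I would verify that the $B''$ produced has the claimed shape. In the proof of \cite[Corollary~3.7]{Bloomthesis} the increment Bohr set is obtained by intersecting a small dilate $B'_\eta$ of the ambient Bohr set with a short ``large-spectrum'' Bohr set $\widetilde{B}$ built from a Bohr set chasing / $L^2$-density-increment step; Bloom's bookkeeping there gives $\eta \asymp \exp(-\tilde{O}_\alpha(1))/d$, rank $|\widetilde\Gamma| \le d'/4$, and width $\widetilde\nu \ge 1/d'$ on $\widetilde\Gamma$ with $d' = \tilde{O}_\alpha(\alpha^{-1})$ — these are precisely the parameters quoted in the statement, so this step is a matter of reading off constants from the cited proof rather than reproving anything. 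One should also pass to a regular dilate of $B'_\eta \cap \widetilde B$ if necessary (absorbing the loss into $\eta$ via \cite[Lemma~4.3]{BS}), which does not affect the stated asymptotics.

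The main obstacle — really the only non-cosmetic point — is confirming that the three distinct commuting automorphisms $\mathrm{Id}_G, T_2, T_3$ do not break the argument of \cite[Corollary~3.7]{Bloomthesis}, which is stated for ordinary three-term progressions. Concretely one has to check that whenever the proof there forms an intersection of dilates of Bohr sets (for instance to localise a convolution or to run the iteration), the corresponding intersection here — e.g.\ $B \cap B_3 = B \cap (-T_3 B)$, or the pullbacks $T_2^{-1} B'$, $T_3^{-1} B$ — is again a Bohr set of \emph{controlled} rank. This is exactly the frequency-set issue flagged in \cref{rem:frequencysets}: $T_i^{-1} B$ has frequency set $\{\gamma \circ T_i \mid \gamma \in \Gamma\}$, of the same size as $\Gamma$, so each individual pullback costs nothing in rank, and a single intersection at most doubles it — which is harmless here because \cref{cor:thesis} is applied only once (or a bounded number of times), not iterated. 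Since no Bohr-set iteration on the automorphism side occurs inside this particular lemma, the rank bounds ``$\le 3d$'' in the hypothesis and the $d'$ in the conclusion suffice, and the proof of \cite[Corollary~3.7]{Bloomthesis} goes through verbatim with $2 \mapsto T_2$, $1 \mapsto T_3$ substituted in the appropriate places. I would therefore present the proof as: reduce to the thesis statement by the automorphism substitutions above, invoke \cite[Corollary~3.7]{Bloomthesis} (checking the $\rho$ threshold and that automorphisms preserve regularity, rank and density), and translate the output Bohr set into the form $B'_\eta \cap \widetilde B$ with the stated parameters, regularising at the end.
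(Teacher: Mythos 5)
Your overall route is the same as the paper's: substitute $A_2 \mapsto T_2A_2$ and $A_3 \mapsto -T_3A_3$ so that $T(A_1,A_2,A_3)$ counts solutions of $x+y=z$, and then invoke \cite[Corollary~3.7]{Bloomthesis} as a black box, reading off the shape $B''=B'_\eta\cap\widetilde B$ and its parameters from that proof. However, there is a concrete error in your set-up: from $A_3\subset T_3^{-1}B$ you get $-T_3A_3\subset -T_3(T_3^{-1}B)=-B=B$, because Bohr sets are symmetric (\cref{not:bohr}); your computation ``$-T_3T_3^{-1}B=-T_3B=:B_3$'' is simply wrong, and the third Bohr set $B_3$ you introduce does not exist in the problem. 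Once this is corrected, both outer sets $A_1$ and $-T_3A_3$ lie in the \emph{same} regular Bohr set $B$, with $T_2A_2\subset B'\subset B_\rho$, which is exactly the configuration of the thesis corollary; no modification of its argument, and in particular no discussion of commuting automorphisms, pullback frequency sets or rank growth, is needed here, since after the substitution the automorphisms have disappeared entirely.

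The error matters because your proposed workaround does not deliver the stated conclusion. Passing to a regular dilate of $B\cap B_3=B\cap(-T_3B)$ destroys the density hypotheses: nothing bounds the relative densities of $A_1$ and $-T_3A_3$ inside that intersection from below by $\alpha$, and the ``many solutions'' case would then produce a bound in terms of $\mu(B\cap(-T_3B))$, which can be far smaller than $\mu(B)$, so the claimed $T(A_1,A_2,A_3)\gg\alpha^3\mu(B)\mu(B')$ would not follow. Your hedged alternative — rerunning the proof of \cite[Corollary~3.7]{Bloomthesis} with two distinct outer Bohr sets — amounts to reproving the cited result rather than applying it, and is not carried out. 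Separately, the actual verification the paper does perform is the translation into the thesis's vocabulary: using regularity, $|(B+B')\setminus B|\leq|B_{1+\rho}\setminus B|=O(d\rho)|B|$, so $B'$ is $(2^{-270}\alpha)$-sheltered by $B$ for $c$ small (and similarly for $B''$), and every $\gamma'\in\langle\widetilde\Gamma\rangle$ satisfies $|1-\gamma'(x)|\leq\frac14$ on $B''$ because $\widetilde\nu\geq 1/d'$ and $|\widetilde\Gamma|\leq d'/4$; your proposal only gestures at ``checking the $\rho$ threshold,'' so you should make these hypothesis checks explicit once the $B_3$ slip is repaired.
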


\begin{proof}
This follows directly from \cite[Corollary 3.7]{Bloomthesis}, applied to the sets $A_1$, $T_2A_2$ and $-T_3A_3$. Note that, since $B$ is a regular Bohr set,
\begin{equation*}
|(B+B')\setminus B| \leq |B_{1+\rho} \setminus B| \leq (1+O(d\rho)) |B|,
\end{equation*}
so that $B'$ is $(2^{-270}\alpha)$-sheltered by $B$, provided that $c$ is sufficiently small (see \cite{Bloomthesis} for the definition of `sheltered' in this context). We see in a similar way that $B''$ has the required amount of shelter. Finally, if $x\in B''$ and $\gamma' \in \langle \widetilde{\Gamma}\rangle$, say $\gamma' = \sum_{\gamma\in \Gamma} \eps_{\gamma}\gamma$ with $ \eps_{\gamma}\in \{-1, 0, 1\}$, we have 
\begin{equation*}
\lvert 1-\gamma'(x)\rvert \leq \sum_{\gamma\in \widetilde{\Gamma}} \lvert 1-\gamma(x)^{\eps_\gamma}\rvert \leq \tfrac14
\end{equation*} 
as required.
\end{proof}

\begin{proposition}
\label{prop:weakerincrement}
Let $\B$ be a regular Bohr set of rank $d$, and let $\A \subset \B$ of relative density $\alpha$. Let~$B^{\star} = {\B \cap T_2\B \cap T_3\B}$. Then, either 
\begin{equation*}
T(\A, \A, \A) \gg \exp \left(-\tilde{O}_{\alpha}(d \log(2d)\right) \mu(B^{\star})^2
\end{equation*}
or $\A$ has a density increment of strength
\begin{equation*}
[1, \alpha^{-1}; \tilde{O}_{\alpha}(1)]
\end{equation*}
with respect to either $B^{\star}$, $T_2^{-1}B^{\star}$ or $T_3^{-1}B^{\star}$.
\end{proposition}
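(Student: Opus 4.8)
The plan is to mimic the corresponding step in \cite{BS}: first localise $\A$ inside small sub-Bohr sets of $\B$ using \cref{lem:bourgain}, then feed the localised pieces into the counting lemma \cref{cor:thesis}. The structural fact that makes this work is that, since the frequency set of $B^{\star}=\B\cap T_2\B\cap T_3\B$ is contained in $\Gamma_{\B}\cup(\Gamma_{\B}\circ T_2^{-1})\cup(\Gamma_{\B}\circ T_3^{-1})$, its rank $\delta:=\mathrm{rk}(B^{\star})$ obeys $d\le\delta\le 3d$. I would first fix a sufficiently small absolute constant $\eps>0$ (so that $(1+c)(1-\eps)\ge 1+c/2$, where $c$ is the increment constant of \cref{cor:thesis}) and, using the existence of regular dilates, pick $\rho_1\asymp\eps\alpha/d$ with $B^{\star}_{\rho_1}$ regular and then $\rho_2\asymp\alpha/\delta$ with $(B^{\star}_{\rho_1})_{\rho_2}$ regular; note $\rho_1,\rho_2\gtrsim\alpha/\delta$. (If $B^{\star}$ is not itself regular one replaces it by a regular dilate first; since $\delta\le 3d$ this changes $\mu(B^{\star})$ by at most $\exp(-O(d))$, which is harmless below.)

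Next I would apply \cref{lem:bourgain} to $\B$, $\A$, $\eps$ and the regular Bohr sets $B_1:=B^{\star}_{\rho_1}$, $B_2:=T_2^{-1}(B^{\star}_{\rho_1})_{\rho_2}$, $B_3:=T_3^{-1}B^{\star}_{\rho_1}$, which all lie in $\B_{\rho_1}$ because $B^{\star}\subset\B$ forces $T_2^{-1}B^{\star},T_3^{-1}B^{\star}\subset\B$. In the density-increment alternative, $\A$ has an increment of strength $[\eps,0;O(1)]$ relative to some $B_i$; since $B_i$ is a dilate, of the same rank $\delta$, of $B^{\star}$, $T_2^{-1}B^{\star}$ or $T_3^{-1}B^{\star}$, the Bohr set $B''=(B_i)_{\rho}$ of \cref{def:increments} is automatically of the form $(B^{\star})_{\rho'}$, $(T_2^{-1}B^{\star})_{\rho'}$ or $(T_3^{-1}B^{\star})_{\rho'}$ with $\rho'\ge\rho_1\rho_2\rho\gtrsim(\alpha/\delta)^{2}\rho$. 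Because $\delta\le 3d$, the extra factor $(\rho_1\rho_2)^{\delta}\ge\exp(-O(\delta\log(2\delta/\alpha)))$ lost on the left of \cref{eq:defsizebound}, together with the weakening of $[\eps,0;O(1)]$ to $[1,\alpha^{-1};\cdot]$, is absorbed by enlarging the constant to $\tilde{O}_{\alpha}(1)$; this gives the second conclusion. So assume instead that \cref{lem:bourgain} produces $x\in\B$ with $\ind{\A}*\m{B_i}(x)\ge(1-\eps)\alpha$ for $i=1,2,3$, and set $A_i:=(\A-x)\cap B_i$, which has relative density $\ge(1-\eps)\alpha$ in $B_i$, with $A_1\subset B^{\star}_{\rho_1}$, $A_2\subset T_2^{-1}(B^{\star}_{\rho_1})_{\rho_2}$, $A_3\subset T_3^{-1}B^{\star}_{\rho_1}$.

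I would then apply \cref{cor:thesis} with ambient Bohr set $B^{\star}_{\rho_1}$ (regular, rank $\delta$), inner Bohr set $(B^{\star}_{\rho_1})_{\rho_2}$ (regular, rank $\delta\le 3\delta$, contained in $(B^{\star}_{\rho_1})_{c(1-\eps)\alpha/\delta}$ by the choice of $\rho_2$), density parameter $(1-\eps)\alpha$, and the sets $A_1,A_2,A_3$. In its first alternative, $T(A_1,A_2,A_3)\gg((1-\eps)\alpha)^{3}\,\mu(B^{\star}_{\rho_1})\,\mu((B^{\star}_{\rho_1})_{\rho_2})$; two applications of \cref{lem:boundbohr} give $\mu(B^{\star}_{\rho_1})\ge(\rho_1/4)^{\delta}\mu(B^{\star})$ and $\mu((B^{\star}_{\rho_1})_{\rho_2})\ge(\rho_2/4)^{\delta}\mu(B^{\star}_{\rho_1})$, so (using $\rho_1,\rho_2\gtrsim\alpha/\delta$ and $\delta\le 3d$) the right side is $\gg\exp(-\tilde{O}_{\alpha}(d\log(2d)))\,\mu(B^{\star})^{2}$, while $A_1,A_2,A_3\subset\A-x$ together with monotonicity and translation-invariance of $T$ give $T(A_1,A_2,A_3)\le T(\A-x,\A-x,\A-x)=T(\A,\A,\A)$; this is the first conclusion. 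In the second alternative of \cref{cor:thesis}, one of $A=A_1$, $A=-T_3A_3$ satisfies $\norm{\ind{A}*\m{B''}}_{\infty}\ge(1+c)(1-\eps)\alpha\ge(1+c/2)\alpha$ for a regular $B''=((B^{\star}_{\rho_1})_{\rho_2})_{\eta}\cap\widetilde B$ with $\eta\asymp\exp(-\tilde{O}_{\alpha}(1))/\delta$, $\mathrm{rk}(\widetilde B)\le d'/4$, $\widetilde\nu\ge 1/d'$, $d'=\tilde{O}_{\alpha}(\alpha^{-1})$. Since $A_1\subset\A-x$ and $-T_3A_3\subset(-T_3)(\A-x)$, and since $\norm{\ind{S}*\m{B}}_{\infty}$ is unchanged under $(S,B)\mapsto(\phi S+t,\phi B)$ for an automorphism $\phi$ and a translate $t$ (using that Bohr sets are symmetric, so $-T_3^{-1}B^{\star}=T_3^{-1}B^{\star}$), this becomes $\norm{\ind{\A}*\m{B'''}}_{\infty}\ge(1+c/2)\alpha$ with $B'''$ equal to $(B^{\star})_{\rho_1\rho_2\eta}\cap\widetilde B$ or $(T_3^{-1}B^{\star})_{\rho_1\rho_2\eta}\cap(-T_3^{-1}\widetilde B)$. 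As $\rho_1\rho_2\eta\gtrsim\exp(-\tilde{O}_{\alpha}(1))/\delta^{3}$, $\mathrm{rk}(B^{\star})=\mathrm{rk}(T_3^{-1}B^{\star})=\delta\le 3d$ and $\widetilde\nu\ge 1/d'$ with $d'=\tilde{O}_{\alpha}(\alpha^{-1})$, checking \cref{eq:defsizebound} for $B'''$ reduces to $(\rho_1\rho_2\eta/4)^{\delta}(8d')^{-d'/4}\ge(2\delta(\alpha^{-1}+1))^{-C(\delta+\alpha^{-1})}$, which holds with $C=\tilde{O}_{\alpha}(1)$ because the left side is $\exp(-O(\delta\log(2\delta))-\tilde{O}_{\alpha}(\delta+\alpha^{-1}))$. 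Thus $\A$ has an increment of strength $[1,\alpha^{-1};\tilde{O}_{\alpha}(1)]$ relative to $B^{\star}$ or $T_3^{-1}B^{\star}$, again the second conclusion.

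The hard part is the bookkeeping in this last step. The three localisations must be centred at a common point $x$, which is why \cref{lem:bourgain} has to be applied to all three Bohr sets at once; and, more seriously, the density increment I output is relative to $B^{\star}$ (or $T_2^{-1}B^{\star}$, $T_3^{-1}B^{\star}$) itself, although it is witnessed on a Bohr set shrunk from $B^{\star}$ by a factor polynomial in $d$ — roughly $d^{-3}$, up to $\tilde{O}_{\alpha}(1)$, coming from $\rho_1\rho_2\eta$. Passing back to $B^{\star}$ via \cref{lem:relativeincrements} would cost a $\log d$, which is not allowed; what rescues the argument is exactly the shape of \cref{def:increments}, whose size condition \cref{eq:defsizebound} has right-hand side $(2d(d'+1))^{-C(d+d')}$, so that a $d^{-O(1)}$ shrinkage — an $\exp(-O(d\log d))$ loss on the left — is swallowed by taking $C=\tilde{O}_{\alpha}(1)$, with no $d$-dependent penalty. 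This is possible only because $\mathrm{rk}(B^{\star})\le 3d$; keeping every Bohr set's rank comparable to $\mathrm{rk}(\B)$ is precisely what the choice $B^{\star}=\B\cap T_2\B\cap T_3\B$ secures, as in \cref{rem:frequencysets}.
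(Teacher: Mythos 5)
Your argument is correct and is essentially the paper's own proof: you localise with \cref{lem:bourgain} on the three regular Bohr sets $B^{\star}_{\rho_1}$, $T_2^{-1}(B^{\star}_{\rho_1})_{\rho_2}$, $T_3^{-1}B^{\star}_{\rho_1}$, then apply \cref{cor:thesis} with $B=B^{\star}_{\rho_1}$ and $B'=(B^{\star}_{\rho_1})_{\rho_2}$, treating the counting case via \cref{lem:boundbohr} and translation invariance and transporting the increment cases back to $B^{\star}$, $T_2^{-1}B^{\star}$ or $T_3^{-1}B^{\star}$, exactly as in the paper. The only divergence is cosmetic bookkeeping: where you verify \cref{eq:defsizebound} by hand, the paper invokes \cref{lem:relativeincrements} — and, contrary to your closing remark, that route costs no $\log d$, since each dilation factor ($\rho_1$, $\rho_2$, $\eta$) is at least $\exp(-\tilde{O}_{\alpha}(1))$ times the reciprocal of the relevant rank, so successive applications of that lemma only add $\tilde{O}_{\alpha}(1)$ to the constant.
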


\begin{proof}
Let $\eps = c/2$, where $c$ is the constant of \cref{cor:thesis}. We apply \cref{lem:bourgain} with $B_1 = (B^{\star})_{\rho}$, $B_2 = T_2^{-1}(B^{\star})_{\rho\rho'}$, $B_3 = T_3^{-1}(B^{\star})_{\rho}$, where $\rho = c_1\alpha \eps/d$ and $\rho' = c_2\alpha/d$, with $c_1, c_2>0$ being two small constants, chosen in particular such that $B_1$, $B_2$ and $B_3$ are regular.

If the second case of \cref{lem:bourgain} holds, then $\A$ has a density increment of strength $[1, 0; O(1)]$ relative to one of the $B_i$'s. By \cref{lem:relativeincrements}, this implies that $\A$ has a density increment of strength $[1, 0; \tilde{O}_{\alpha}(1)]$ relative to $B^{\star}$, $T_2^{-1}B^{\star}$ or $T_3^{-1}B^{\star}$.

We may thus suppose that the first case of \cref{lem:bourgain} holds. That is, there is some $x\in G$ such that, if we let 
\begin{equation*}
A_1 = (\A - x) \cap B_1, \quad A_2 = (\A - x) \cap B_2\quad \text{and}\quad A_3 = (\A - x) \cap B_3,
\end{equation*}
then each $A_i$ has relative density at least $(1-\eps) \alpha$ in the corresponding $B_i$. We now use \cref{cor:thesis} with $B = (B^{\star})_{\rho}$ and $B' = (B^{\star})_{\rho\rho'}$. 

\begin{enumerate}
	\item In the first case, we get
        \begin{equation*}
        T(A_1, A_2, A_3) \gg (1-\eps)^3\alpha^3 \mu(B)  \mu(B') \gg \exp \left(-\tilde{O}_{\alpha}(d \log(2d)\right) \mu(B^{\star})^2,
        \end{equation*}
        where the last inequality follows from \cref{lem:boundbohr}. Since the $A_i$'s are subsets of the same translate of $\A$ and the equation $a_1+T_2a_2+T_3a_3 = 0$ is translation-invariant, we have 
        \begin{equation*}
        {T(\A, \A, \A) \geq T(A_1, A_2, A_3)},
        \end{equation*}
        which gives the claimed bound. 

	\item In the second case, there is a regular Bohr set $B''$ as in the statement of the lemma such that 
        \begin{equation}
        \label{eq:epsilonchoice}
        \norm{1_{A} * \m{B''}}_{\infty} \geq (1+c)(1-\eps) \alpha \geq (1+c/4)\alpha,
        \end{equation}
        where $A$ is either $A_1$ or $-T_3A_3$. We therefore deduce that $\A$ has a density increment of strength $[1, \alpha^{-1}; \tilde{O}_{\alpha}(1)]$ relative to $(B^{\star})_{\rho\rho'}$ or $T_3^{-1}(B^{\star})_{\rho\rho'}$. By \cref{lem:relativeincrements}, this means that $\A$ has a density increment of the same strength relative to $B^{\star}$ or $T_3^{-1}B^{\star}$.\qedhere
\end{enumerate}
\end{proof}

We now iteratively apply \cref{prop:weakerincrement} to obtain \cref{thm:weakerbound}, which plays the same role as \cite[Theorem~5.4]{BS} in the proof of Bloom and Sisask.

\begin{proposition}
\label{thm:weakerbound}
Let $B = \mathrm{Bohr}(\Gamma, \nu)$ be a regular Bohr set of rank $d$ and suppose that $A\subset B$ has density $\alpha$. Then
\begin{equation*}
T(A, A, A) \geq \exp \left(-\tilde{O}_{\alpha}(d + \alpha^{-1}) \log{2d}\right)\Bigg( \prod_{\gamma\in \Gamma}\frac{\nu(\gamma)}{8}\Bigg)^{\tilde{O}_{\alpha}(1)}.
\end{equation*}
\end{proposition}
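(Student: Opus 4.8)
The strategy is to iterate \cref{prop:weakerincrement} starting from the pair $(B, A)$. At each stage we have a regular Bohr set together with a dense subset, and \cref{prop:weakerincrement} either produces the desired lower bound on $T(A,A,A)$ (via translation-invariance and the fact that $T$ of a translate of $A$ is a lower bound for $T(A,A,A)$) or yields a density increment of strength $[1, \alpha^{-1}; \tilde O_\alpha(1)]$ with respect to one of $B^\star$, $T_2^{-1}B^\star$ or $T_3^{-1}B^\star$, where $B^\star = B \cap T_2 B \cap T_3 B$. In the increment case we pass to the new Bohr set $B''$ of the form $B''_\rho \cap \widetilde B$ furnished by \cref{def:increments}, and to the translate of $A$ on which the density has jumped from $\alpha$ to $(1+c')\alpha$ for an absolute $c' > 0$, and repeat.

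\textbf{Bookkeeping of the three parameters.} Three quantities must be tracked through the iteration: the density, the rank, and the size (equivalently, the product $\prod_\gamma \nu(\gamma)/8$ over the frequency set). Since each increment multiplies the density by $1 + c'$, the number of steps before the density would exceed $1$ is $\tilde O_\alpha(1)$; this is where the $\alpha^{-1}$-type dependence and all the suppressed logarithmic factors in $\tilde O_\alpha$ originate. The rank is the delicate one, and this is where \cref{rem:frequencysets} is essential: forming $B^\star$ replaces the frequency set $\Gamma$ of the current Bohr set by $\{\gamma \circ T_j^{-1} : \gamma \in \Gamma,\ j \in \{1,2,3\}\}$ (here using $T_1 = \mathrm{Id}$), and the extra Bohr set $\widetilde B$ contributed by the increment has rank at most $\tilde O_\alpha(\alpha^{-1})$. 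One must check — using that $T_2, T_3$ commute and that, after peeling off, $B_n = \bigcap_{T \in W_n} TB_0$ with $|W_n|$ polynomial in $n$ — that the rank after $\tilde O_\alpha(1)$ steps is still $\tilde O_\alpha(d + \alpha^{-1})$; this is exactly the polynomial-in-$n$ growth promised in \cref{rem:frequencysets}, and since $n = \tilde O_\alpha(1)$ the cumulative rank stays linear in $d + \alpha^{-1}$ up to the $\tilde O_\alpha(1)$ factor. Finally, the size bound \cref{eq:defsizebound} attached to each increment (together with \cref{lem:boundbohr} applied to the dilates $B^\star_\rho$, $B^\star_{\rho\rho'}$, with $\rho, \rho' \asymp \alpha/d$ at each stage, and to the passage $B \rightsquigarrow B^\star$) controls how much the product $\prod_\gamma \nu(\gamma)/8$ can shrink: each step costs a factor of the shape $(2d(d'+1))^{-C(d+d')}$, i.e.\ $\exp(-\tilde O_\alpha((d + \alpha^{-1})\log 2d))$, and after $\tilde O_\alpha(1)$ steps the total loss is of the same shape, which is precisely the first exponential factor in the conclusion. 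The residual power $(\prod_\gamma \nu(\gamma)/8)^{\tilde O_\alpha(1)}$ absorbs the accumulated dilations.

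\textbf{Putting it together.} The iteration must terminate: the density cannot exceed $1$, so after at most $\tilde O_\alpha(1)$ applications of \cref{prop:weakerincrement} we are forced into its first alternative, which gives
\begin{equation*}
T(A, A, A) \geq \exp\!\left(-\tilde O_\alpha\!\left((d + \alpha^{-1})\log 2d\right)\right)\mu(B_{\mathrm{final}}^\star)^2,
\end{equation*}
where $B_{\mathrm{final}}$ is the Bohr set reached at the last step. Unwinding $\mu(B_{\mathrm{final}}^\star)$ back to $\mu(B)$ and then to $\prod_{\gamma \in \Gamma} \nu(\gamma)/8$ via \cref{lem:boundbohr} at each stage — using $|G|\mu(B) \geq (\prod_\gamma \nu(\gamma)/4)|G|$-type estimates and collecting the exponentials — yields the stated bound. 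One must also handle the trivial degenerate cases (e.g.\ $\alpha$ or $\mu(B)$ so small that the claimed bound is vacuous, or $2d$ bounded) separately, and at the very start replace $B$ by a regular dilate if needed, but these are routine.

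\textbf{Main obstacle.} The genuine difficulty is the rank control: verifying that repeatedly forming $B \mapsto B \cap T_2 B \cap T_3 B$ and intersecting with the auxiliary Bohr sets $\widetilde B$ does not blow the rank up exponentially. The key inputs are the commutativity of $T_2, T_3$ (so that the set $W_n$ of length-$n$ words in $T_1, T_2, T_3$ has only polynomially many distinct values as endomorphisms), the identity $B_n = \bigcap_{T \in W_n} T B_0$, and the fact that only $\tilde O_\alpha(1)$ iterations occur. Everything else is careful bookkeeping of the $\tilde O_\alpha$-factors and mechanical application of \cref{lem:boundbohr} and \cref{lem:relativeincrements}.
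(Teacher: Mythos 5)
Your proposal follows essentially the same route as the paper's proof: iterate \cref{prop:weakerincrement}, use the bounded density to stop after $\tilde{O}_{\alpha}(1)$ steps, control the rank of the iterated intersections $B\cap T_2B\cap T_3B$ via the commutativity of the $T_i$ and the polynomial growth of the word sets $W_n$, and then unwind $\mu(B_l^{\star})$ with \cref{lem:boundbohr} and \cref{eq:defsizebound} to reach the stated bound. The only minor imprecision is that the increment factor is $1+C^{-1}$ with $C=\tilde{O}_{\alpha}(1)$ rather than $1+c'$ for an absolute constant $c'$, but this does not change the count of $\tilde{O}_{\alpha}(1)$ iterations, so the argument is the same as the paper's.
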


\begin{proof}
Let $C = \tilde{O}_{\alpha}(1)$ be the constant in the density increment case of \cref{prop:weakerincrement} ($C$ is fixed as $\alpha$ is given). Recall that $\tilde{O}_{\alpha'}(1)$ is short for $C_1 \log(2/\alpha')^{C_2}$, which is a decreasing function of $\alpha'$. Thus, if we use \cref{prop:weakerincrement} with some pair $\A' \subset \B'$ having relative density $\alpha' \geq \alpha$ and the second case applies, we will have a density increment of strength $[1, (\alpha')^{-1}; C]$.

We inductively construct two sequences $(A_n)$ and $(B_n)$, where, for each $n$, $A_n$ is a subset of $B_n$ with relative density $\alpha_n$. Let $A_0 = A$ and $B_0 = B$. Assume that $A_i$ and $B_i$ have been constructed for $i<n$. We use \cref{prop:weakerincrement} with $\A = A_{n-1}$ and $\B = B_{n-1}$. If the first case of the proposition holds, we stop the construction. Otherwise, we are in the density increment case and there are sets $A_n \subset B_n$ such that
\begin{itemize}
	\item $A_{n}$ is a subset of a translate of $A_{n-1}$;
	\item $A_n$ is a subset of $B_n$ of relative density $\alpha_n \geq (1+C^{-1})\alpha_{n-1}$;
	\item $B_n$ is a regular Bohr set of the form 
	\begin{equation}
		\label{eq:inters1weaker}
		B_n = (S_n B_{n-1}^{\star})_{\rho_n} \cap \widetilde{B}_{n},
	\end{equation}
	where 
	\begin{itemize}
		\item $B_{n-1}^{\star}$ is the Bohr set
		\begin{equation}
		\label{eq:inters2weaker}
		B_{n-1}^{\star}:= B_{n-1} \cap T_2 B_{n-1}\cap T_3 B_{n-1},
		\end{equation}
		whose rank we denote by $d_{n-1}^{\star}$,
		\item $S_n$ is either $\mathrm{Id}_G$, $T_2^{-1}$ or $T_3^{-1}$,
		\item $\widetilde{B}_n = \mathrm{Bohr}(\widetilde{\Gamma}_n, \widetilde{\nu}_n)$, where $|\widetilde{\Gamma}_{n}| \leq C\alpha^{-1}$ and 
	\begin{equation}
	\label{eq:boundwidthweaker}
	\bigg(\frac{\rho_n}{4}\bigg)^{d_{n-1}^{\star}}\prod_{\gamma \in \widetilde{\Gamma}_n} \frac{\widetilde{\nu}_n(\gamma)}{8} \geq  \exp \left(-\tilde{O}_{\alpha} \left(d_{n-1}^{\star}+\alpha^{-1}\right) \log\left(d_{n-1}^{\star}\right)\right).
	\end{equation}
	\end{itemize}
\end{itemize}

Note that, since $1\geq \alpha_n \geq (1+C^{-1})^n\alpha$, this construction must terminate in $l = \tilde{O}_{\alpha}(1)$ steps. We then arrive at $A_l\subset B_l$, for which 
\begin{equation}
\label{eq:TAAAweaker}
T(A, A, A) \geq T(A_l, A_l, A_l) \gg \exp \left(-\tilde{O}_{\alpha} \left(d_l \log(2d_l)\right) \right) \mu(B_l^{\star})^2,
\end{equation}
where $d_l$ is the rank of $B_l$ and, as usual, $B_l^{\star} = B_{l} \cap T_2 B_{l}\cap T_3 B_{l}$.

Let $W_i$ be the set of all automorphisms obtained by composing $i$ elements of $\{\mathrm{Id}_{G}, T_2, T_3, T_2^{-1}, T_3^{-1}\}$. Since $T_3 = -\mathrm{Id}_{G} - T_2$, these automorphisms commute, which implies that $|W_i| \leq (2i+1)^2$.

An immediate induction using \cref{eq:inters1weaker,eq:inters2weaker} shows that 
\begin{equation}
\label{eq:Bohrsetsweaker}
B_i^{\star} \supset \Bigg(\bigcap_{T\in W_{2i+1}} (TB)_{\rho_1\cdots \rho_i}\Bigg)  \cap\Bigg( \bigcap_{j=1}^{i} \bigcap_{T\in W_{2i+1}} (T \widetilde{B}_j)_{\rho_{j+1}\cdots \rho_i} \Bigg)
\end{equation}
for $0\leq i\leq l$. The same reasoning shows that the frequency set of $B_i$ is contained in 
\begin{equation*}
\Bigg(\bigcup_{T\in W_{2i}} T\Gamma\Bigg)  \cup\Bigg( \bigcup_{j=1}^{i} \bigcup_{T\in W_{2i}} T \widetilde{\Gamma}_j\Bigg).
\end{equation*}
This shows that
\begin{equation*}
d_{l} \ll l^2 d + l^3 C\alpha^{-1} = \tilde{O}_{\alpha}(d  + \alpha^{-1}).
\end{equation*}
In particular, \cref{eq:boundwidthweaker} becomes
\begin{equation}
\label{eq:boundwidthweaker3}
	\bigg(\frac{\rho_n}{4}\bigg)^{d_{n-1}^{\star}}\prod_{\gamma \in \widetilde{\Gamma}_n} \frac{\widetilde{\nu}_n(\gamma)}{8} \geq   \exp \left( -\tilde{O}_{\alpha} \left(d+\alpha^{-1}\right)\log (2d) \right),
\end{equation}
for $1\leq n \leq l$. 

We now use \cref{lem:boundbohr} to give a lower bound for $\mu(B_l^{\star})$. By \cref{eq:Bohrsetsweaker}, we have 
\begin{equation*}
\mu(B_l^{\star}) \geq \left(\Bigg(\frac{1}{4}\prod_{j=1}^{l} \rho_j\Bigg)^d\prod_{\gamma\in \Gamma} \frac{\nu(\gamma)}{8}\right)^{|W_{2l+1}|} \cdot \prod_{j=1}^{l}\left( \Bigg(\frac{1}{4}\prod_{n=j+1}^{l} \rho_n\Bigg)^{\mathrm{rk}(\widetilde{B}_j)}\prod_{\gamma\in \widetilde{\Gamma}_j} \frac{\widetilde{\nu}_j(\gamma)}{8}\right)^{|W_{2l+1}|}.
\end{equation*}
Using \cref{eq:boundwidthweaker3} and the simple inequalities $d\leq d_{j-1}^{\star}$ and $\mathrm{rk}(\widetilde{B}_j) \leq d_{n-1}^{\star}$ for $j\geq 1$ and $j+1\leq n\leq l$, this yields
\begin{equation*}
\mu(B_l^{\star}) \geq  \exp \left( -\tilde{O}_{\alpha} \left(d+\alpha^{-1}\right)\log (2d) \right) \Bigg(\prod_{\gamma\in \Gamma} \frac{\nu(\gamma)}{8}\Bigg)^{\tilde{O}_{\alpha}(1)}.
\end{equation*}
Together with \cref{eq:TAAAweaker}, this concludes the proof of \cref{thm:weakerbound}.
\end{proof}

\section{Proof of the Main Theorem}
\label{sec:pfmain}

This section is dedicated to the proof of \cref{thm:main}. Each statement in this section is an adaptation of a corresponding statement in \cite{BS}. To help the reader, we will highlight the changes made to the original statements of \cite{BS} in blue.

We start by proving an analogue of \cite[Lemma~8.2]{BS} in our setting. When $A \subset B$, the notation $\m{A/B}$ stands for the \emph{balanced function} $\m{A/B} := \m{A} - \m{B}$.

\begin{lemma}
\label{lem:progressions}
There is a constant $c > 0$ such that the following holds. Let $0 < \alpha \leq 1$. Let $B$ be a regular Bohr set of rank $d$, and $B'$ another regular Bohr set such that \blue{$T_3B' \subset B_{\rho}$}, with $\rho \leq c\alpha/d$. Let \blue{$A_1 \subset B$, $A_2\subset T_2^{-1}B$} and $A_3 \subset B'$, each time with relative density in $[\alpha/2, 2\alpha]$. Then either 
\begin{enumerate}
	\item (many solutions) $\blue{T(A_1, A_2, A_3)} \geq \tfrac{1}{16} \alpha^3 \mu(B) \mu(B')$, or
	\item (large $L^2$ mass on a spectrum) there is some $\eta \gg \alpha$ such that
	\begin{equation*}
	\sum_{\gamma \in \Delta_\eta (\blue{-T_3A_3})} |\mh{A/B}(\gamma)|^2 \gtrsim_{\alpha} \eta^{-1} \mu(B)^{-1},
	\end{equation*}
	where \blue{$A$ is either $A_1$ or $T_2A_2$}.
\end{enumerate}
\end{lemma}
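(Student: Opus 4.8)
The plan is to adapt the standard Fourier-analytic dichotomy for counting three-term configurations (\cite[Lemma~8.2]{BS}) to our twisted setting. The starting point is to expand the counting functional $T(A_1,A_2,A_3) = \inner{\ind{A_1} * \ind{T_2A_2}}{\ind{-T_3A_3}}$ on the Fourier side. Since the $T_i$ are automorphisms, the Fourier transform of $\ind{T_iA_i}$ is a dilate of that of $\ind{A_i}$ by the adjoint automorphism; so the Parseval/convolution identity turns $T(A_1,A_2,A_3)$ into a sum over $\widehat G$ of a product of three Fourier coefficients, evaluated at frequencies related by the (automorphism-twisted) linear relation. The first step is therefore to write out this expansion cleanly, keeping track of how $\widehat{T_2}$ and $\widehat{T_3}$ act on the frequency variable, and to isolate the main term coming from the trivial frequency $\gamma = 0$, which is exactly $\alpha_1\alpha_2\alpha_3\,\mu(B)\mu(B')\,\mu(B\cap T_2^{-1}B\cap\dots)$-type quantity — in any case a quantity $\gg \alpha^3\mu(B)\mu(B')$ up to the densities being in $[\alpha/2,2\alpha]$.

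Next I would replace the indicator functions by balanced functions relative to the appropriate Bohr sets. The point of the hypothesis $T_3B' \subset B_\rho$ with $\rho \leq c\alpha/d$ (and the analogous role of $B$ for $A_1,A_2$) is the usual Bohr-set ``almost invariance'': convolving by $\m{B'}$ (or its dilates) barely moves mass, so by regularity of $B$ and \cref{lem:boundbohr} the error terms introduced when passing from $\ind{A_i}$ to $\mu(A_i)\ind{B_i} + |A_i|\,\m{A_i/B_i}$ are controlled by $O(d\rho)\cdot(\text{main term}) \leq \tfrac1{100}\alpha^3\mu(B)\mu(B')$, say. After this linearization, $T(A_1,A_2,A_3)$ equals the main term plus a sum of mixed terms each involving at least one balanced function. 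If all those mixed terms are small — each $\leq \tfrac1{100}\alpha^3\mu(B)\mu(B')$ — then we land in case (1) with room to spare. Otherwise some mixed term is large; by symmetry of the roles of $A_1$ and $T_2A_2$ (both sit inside dilates of $B$, as $A_2 \subset T_2^{-1}B$), we may assume the large term involves $\m{A/B}$ with $A \in \{A_1, T_2A_2\}$, and we want to extract from it the claimed $L^2$-mass on the spectrum $\Delta_\eta(-T_3A_3)$.

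The extraction step is the heart of the argument and proceeds as in Bloom–Sisask: a large mixed term, written on the Fourier side, is bounded (Cauchy–Schwarz, plus using that $\ind{-T_3A_3}$ has small Fourier coefficients off its $\eta$-spectrum for $\eta \gg \alpha$, and that $\m{B'}$-type factors concentrate the frequency variable) by a product in which $\sum_{\gamma} |\mh{A/B}(\gamma)|^2 |\mh{-T_3A_3}(\gamma')|$ appears; restricting the $\gamma$-sum to the large spectrum $\Delta_\eta(-T_3A_3)$ and using $\sum_{\gamma'\notin\Delta_\eta}|\mh{-T_3A_3}(\gamma')|^2 \leq \eta\,|A_3|/|G|^2$-type bounds isolates $\sum_{\gamma \in \Delta_\eta(-T_3A_3)}|\mh{A/B}(\gamma)|^2 \gtrsim_\alpha \eta^{-1}\mu(B)^{-1}$. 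One subtlety specific to our setting: the relation between the three frequency variables is now $\gamma_1 = \widehat{T_2}^{-1}\gamma_1' = \dots$, i.e.\ an automorphism appears, so the spectrum of $-T_3A_3$ that controls the sum is $\Delta_\eta(-T_3A_3) = \widehat{T_3}^{-1}\Delta_\eta(A_3)$ (or similar); one must be careful that these automorphism images of spectra are exactly what the statement demands, and that the Bohr-set dilation hypothesis $T_3B'\subset B_\rho$ (rather than $B'\subset B_\rho$) is precisely what makes the $B'$-convolution factors interact correctly with the $\widehat{T_3}$-twist. The main obstacle I anticipate is bookkeeping: ensuring the automorphism twists are applied consistently throughout the Fourier expansion so that the large-spectrum set that emerges is genuinely $\Delta_\eta(-T_3A_3)$ and the density function on which $L^2$ mass accumulates is genuinely $\m{A/B}$ for $A\in\{A_1,T_2A_2\}$ — the inequalities themselves are the same as in \cite[Lemma~8.2]{BS}, but one cannot simply cite that lemma because the hypotheses about which Bohr set contains which dilate have been twisted by $T_2,T_3$.
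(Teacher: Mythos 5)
Your overall skeleton (balanced functions, regularity of Bohr sets, Parseval, restriction to a large spectrum, dyadic pigeonhole) is the same as the paper's, which simply runs the argument of \cite[Lemma~8.2]{BS} on the three sets $A_1$, $T_2A_2$, $-T_3A_3$. However, two steps as you describe them would fail. First, the trivial frequency does not produce the main term: its contribution is $\mu(A_1)\mu(T_2A_2)\mu(-T_3A_3)\asymp\alpha^3\mu(B)^2\mu(B')$ (these are densities in $G$), which is \emph{smaller} than the target $\tfrac1{16}\alpha^3\mu(B)\mu(B')$ by a factor of $\mu(B)$. In the actual proof one first writes $T(A_1,A_2,A_3)\geq\tfrac18\alpha^3\mu(B)^2\mu(B')\inner{\m{A_1}*\m{T_2A_2}}{\m{-T_3A_3}}$, then decomposes \emph{only} $\m{A_1}$ and $\m{T_2A_2}$ into balanced part plus $\m{B}$, keeping $\m{-T_3A_3}$ intact; the resulting cross terms are not small error terms to be discarded --- by regularity and $-T_3A_3\subset B_\rho$ (via \cite[Lemma~4.5]{BS}) they equal $\mu(B)^{-1}+O(\rho d\alpha^{-1}\mu(B)^{-1})$, and this positive quantity \emph{is} the main term.

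Second, your dichotomy ``either all mixed terms are small or some mixed term is large, and by symmetry it involves $\m{A/B}$'' has a hole: if you also decompose $\ind{-T_3A_3}$, the large term could be one involving only the balanced part of $A_3$, which says nothing about $\mh{A/B}$; and in any case the choice of $A\in\{A_1,T_2A_2\}$ is not made by selecting a mixed term. In the paper, if case (1) fails then the single doubly-balanced term satisfies $\inner{\m{A_1/B}*\m{T_2A_2/B}}{\m{-T_3A_3}}\leq-\tfrac14\mu(B)^{-1}$, and after Parseval the inequality $xy\leq\tfrac12(x^2+y^2)$ forces one of the two $L^2$ masses to dominate; then one discards frequencies with $|\mh{-T_3A_3}|\leq\tfrac18\alpha$ using $\norm{\m{A/B}}_2^2\leq\alpha^{-1}\mu(B)^{-1}$ and pigeonholes dyadically to find $\eta$. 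Finally, your concern about $\widehat{T_2},\widehat{T_3}$ twisting the frequency variables is unnecessary: since the functional is already expressed through the transformed sets $T_2A_2$ and $-T_3A_3$, Parseval involves a single frequency variable and the spectrum that appears is literally $\Delta_\eta(-T_3A_3)$; the automorphisms enter only through the containments $T_2A_2\subset B$ and $-T_3A_3\subset B_\rho$ (using symmetry of Bohr sets), after which the Bloom--Sisask computation goes through verbatim.
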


\begin{proof}
We have 
\begin{equation*}
T(A_1, A_2, A_3) = \inner{\ind{A_1} * \ind{T_2A_2}}{\ind{-T_3A_3}} \geq \tfrac{1}{8}\alpha^3 \mu(B)^2\mu(B') \inner{\m{A_1} * \m{T_2A_2}}{\m{-T_3A_3}}.
\end{equation*}
Replacing $\m{A_1}$ and $\m{T_2A_2}$ with their balanced functions $\m{A_1/B}$ and $\m{T_2A_2/B}$, we have 
\begin{align*}
\inner{\m{A_1} * \m{T_2A_2}}{\m{-T_3A_3}} &= \inner{\m{A_1/B} * \m{T_2A_2/B}}{\m{-T_3A_3}} + E,
\end{align*}
where
\begin{equation*}
E = \inner{\m{A_1} * \m{B}}{\m{-T_3A_3}}+ \inner{\m{B} * \m{T_2A_2}}{\m{-T_3A_3}}- \inner{\m{B} * \m{B}}{\m{-T_3A_3}}.
\end{equation*}
We can estimate $E$ using regularity. Since $-T_3A_3\subset B_{\rho}$, we have $\norm{\m{-T_3A_3} * \m{B} - \m{B}}_1 = O(\rho d)$ by \cite[Lemma~4.5]{BS}. Moreover, $\norm{\m{A_1}}_{\infty}, \norm{\m{T_2A_2}}_{\infty}, \norm{\m{B}}_{\infty} \leq 2\alpha^{-1}\mu(B)^{-1}$. Therefore
\begin{align*}
E&= \inner{\m{A_1} }{\m{-T_3A_3} * \m{B}} + \inner{ \m{T_2A_2}}{\m{-T_3A_3}* \m{B}}- \inner{\m{B}}{\m{-T_3A_3}* \m{B}}\\
&= \inner{\m{A_1} }{\m{B}} + \inner{ \m{T_2A_2}}{\m{B}}- \inner{\m{B}}{\m{B}} + O(\rho d\alpha^{-1} \mu(B)^{-1})\\
&= \mu(B)^{-1} + \mu(B)^{-1}  - \mu(B)^{-1} +O\big(\rho d\alpha^{-1} \mu(B)^{-1}\big)\\
&= \mu(B)^{-1} + O\left(\rho d\alpha^{-1} \mu(B)^{-1}\right).
\end{align*}
In particular, $E\geq \frac{3}{4}\mu(B)^{-1}$, provided $\rho$ is small enough. Thus 
\begin{equation*}
T(A_1, A_2, A_3) \geq \tfrac{1}{8}\alpha^3 \mu(B)^2\mu(B') \Big(\inner{\m{A_1/B} * \m{T_2A_2/B}}{\m{-T_3A_3}} + \tfrac{3}{4}\mu(B)^{-1} \Big)
\end{equation*}
If the first case of the conclusion doesn't hold, then
\begin{equation*}
\inner{\m{A_1/B} * \m{T_2A_2/B}}{\m{-T_3A_3}} \leq -\tfrac{1}{4} \mu(B)^{-1}.
\end{equation*}
By Parseval's identity, followed by the triangle inequality, we deduce that
\begin{equation*}
\inner{|\mh{A_1/B}| |\mh{T_2A_2/B}|}{|\mh{-T_3A_3}|}\geq \tfrac{1}{4} \mu(B)^{-1}.
\end{equation*}
Using $xy\leq \tfrac12 (x^2+y^2)$, we find that 
\begin{equation*}
\sum_{\gamma \in \widehat{G}} |\mh{A/B}(\gamma)|^2|\mh{-T_3A_3}(\gamma)| = \inner{|\mh{A/B}|^2}{|\mh{-T_3A_3}|}\geq \tfrac{1}{4} \mu(B)^{-1},
\end{equation*}
where $A$ is either $A_1$ or $T_2A_2$. Since $\norm{\m{A/B}}_2^2\leq \alpha^{-1}\mu(B)^{-1}$, we can discard the terms of the above sum with $|\mh{-T_3A_3}| \leq \tfrac{1}{8}\alpha$ to obtain
\begin{equation*}
\sum_{\gamma \in \Delta_{\alpha/8}(-T_3A_3)} |\mh{A/B}(\gamma)|^2|\mh{-T_3A_3}(\gamma)| \geq \tfrac{1}{8} \mu(B)^{-1}.
\end{equation*}
By the dyadic pigeonhole principle, we conclude that there is some $1 \geq \eta \gg \alpha$ such that 
\begin{equation*}
\sum_{\gamma \in \Delta_{\eta}(-T_3A_3) \setminus  \Delta_{2\eta}(-T_3A_3) } |\mh{A/B}(\gamma)|^2|\mh{-T_3A_3}(\gamma)| \geqs{\alpha} \mu(B)^{-1}.
\end{equation*}
This concludes the proof since $|\mh{-T_3A_3}(\gamma)| \asymp \eta$ on the set $\Delta_{\eta}(-T_3A_3) \setminus  \Delta_{2\eta}(-T_3A_3)$.
\end{proof}

Next, we modify the statement of \cite[Proposition~8.1]{BS} as follows.

\begin{proposition}
\label{prop:incrementOrANS}
There is a constant $c>0$ such that the following holds. Let $k, h, t\geq 20$ be some parameters. 

Let $0< \alpha\leq 1$. Let $B$ be a regular Bohr set of rank $d$, and $B'$ another regular Bohr set, of \blue{rank at most $3d$}, such that \blue{$B' \subset T_3^{-1}B_{\rho}$}, where $\rho \leq c\alpha^2/d$. Let \blue{$A_1 \subset B$, $A_2 \subset T_2^{-1}B$} and $A_3 \subset B'$, each time with relative density in $[\alpha/2, 2\alpha]$. Then \blue{for either $A = A_1$ or $A = T_2A_2$}, one of the following holds 
\begin{enumerate}
	\item (large density) $\alpha \gg 1/k^2$, or
	\item (many solutions) $\blue{T(A_1, A_2, A_3)} \gg \alpha^3 \mu(B) \mu(B')$, or
	\item $A$ has a density increment of strength either 
	\begin{enumerate}
		\item (small increment) $[1, \alpha^{-1/k}; \tilde{O}_{\alpha}(h \log t)]$ or 
		\item (large increment) $[\alpha^{-1/k}, \alpha^{-1+1/k}; \tilde{O}_{\alpha}(h \log t)]$
	\end{enumerate}
	relative to $\blue{T_3B'}$, or
	\item (non-smoothing large spectrum) there is a set $\Delta$ and three quantities ${\rho_{\mathrm{top}}, \rho_{\mathrm{bottom}}, \rho'\in (0, 1)}$ satisfying
	\begin{equation*}
	\rho_{\mathrm{top}} \gg \alpha^{O(1)}(c/dt)^{O(h)}, \quad \rho_{\mathrm{bottom}}\gg (\alpha/d)^{O(1)}, \quad \text{and}\quad \rho' \gg (\alpha/d)^{O(1)},
	\end{equation*}
	such that 
	\begin{enumerate}
		\item $\alpha^{-3 + O(1/k)} \ll |\Delta| \leqs{\alpha} \alpha^{-3}$,
		\item there exists an additive framework $\tilde{\Gamma}$ of height $h$ and tolerance $t$ between 
		\begin{equation*}
		\Gamma_{\mathrm{top}} := \Delta_{1/2}(\blue{T_3 B_{\rho_{\mathrm{top}}}'})\quad \text{and}\quad\Gamma_{\mathrm{bottom}} := \Delta_{1/2}(\blue{T_3 B_{\rho_{\mathrm{bottom}}}'}),
		\end{equation*}
		\item $\Delta$ is $\tfrac{1}{4}$-robustly $(\tau, k')$-additively non-smoothing relative to $\tilde{\Gamma}$ for some $\alpha^{2-O(1/k)} \gg \tau \gg \alpha^{2+O(1/k)}$ and $k\geq k' \gg k$, and 
		\item if we let $B'' = (\blue{T_3 B_{\rho_{\mathrm{top}}}'})_{\rho'}$ then for all $\gamma \in\Delta + \Gamma_{\mathrm{top}}$
		\begin{equation*}
		|\mh{A/B}|^2 \circ |\mh{B''}|^2 (\gamma) \gg \alpha^{2+O(1/k)} \mu(B)^{-1},
		\end{equation*}
		and
		\item \begin{equation*}
		\Big\|\ind{\Delta} \circ |\mh{B''}|^2\Big\|_{\infty} \leq 2.
		\end{equation*}
	\end{enumerate}
\end{enumerate}
\end{proposition}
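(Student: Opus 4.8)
The plan is to read \cref{prop:incrementOrANS} as the exact analogue of \cite[Proposition~8.1]{BS} under the dictionary in which the three copies of $A$ become $A_1$, $T_2A_2$ and $-T_3A_3$: the first two sit in the rank-$d$ Bohr set $B$ (indeed $A_1 \subset B$ and $T_2A_2 \subset B$), while $-T_3A_3$ sits in $-T_3B' = T_3B'$, a regular Bohr set of rank $\mathrm{rk}(B') \le 3d$ contained in $B_{\rho}$. Since \cite[Proposition~8.1]{BS} is itself obtained by combining \cite[Lemma~8.2]{BS} (our \cref{lem:progressions}) with the workhorse \cite[Lemma~8.5]{BS}, the strategy is to rerun that two-step argument in the present setting.

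First I would apply \cref{lem:progressions} to $A_1 \subset B$, $A_2 \subset T_2^{-1}B$, $A_3 \subset B'$: its hypothesis $T_3B' \subset B_{\rho}$ with $\rho \le c\alpha/d$ follows from $B' \subset T_3^{-1}B_{\rho}$ and $\rho \le c\alpha^2/d \le c\alpha/d$ (shrinking $c$ if needed), and the density ranges agree. If the first alternative of \cref{lem:progressions} holds we obtain $T(A_1,A_2,A_3) \ge \tfrac{1}{16}\alpha^3\mu(B)\mu(B')$, which is conclusion~(2). Otherwise we obtain $A \in \{A_1, T_2A_2\}$ and $\eta \gg \alpha$ with $\sum_{\gamma \in \Delta_{\eta}(-T_3A_3)} |\mh{A/B}(\gamma)|^2 \geqs{\alpha} \eta^{-1}\mu(B)^{-1}$. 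This is precisely the input of \cite[Lemma~8.5]{BS}: a bounded function ($\m{A/B}$, with $\norm{\m{A/B}}_2^2 \le \alpha^{-1}\mu(B)^{-1}$), a subset of relative density $\asymp \alpha$ in a regular Bohr set lying inside a small dilate of the Bohr set attached to that function ($-T_3A_3$ in $T_3B' \subset B_{\rho}$), carrying that much $L^2$ mass on a spectrum. Feeding this into \cite[Lemma~8.5]{BS} then returns, for this $A$, one of: the bound $\alpha \gg 1/k^2$ (conclusion~(1)); a density increment of $A$ of strength $[1, \alpha^{-1/k}; \tilde{O}_{\alpha}(h\log t)]$ or $[\alpha^{-1/k}, \alpha^{-1+1/k}; \tilde{O}_{\alpha}(h\log t)]$ relative to $T_3B'$ (conclusion~(3)); or the additive-framework structure of conclusion~(4), with $\Gamma_{\mathrm{top}} = \Delta_{1/2}(T_3B'_{\rho_{\mathrm{top}}})$, $\Gamma_{\mathrm{bottom}} = \Delta_{1/2}(T_3B'_{\rho_{\mathrm{bottom}}})$, $B'' = (T_3B'_{\rho_{\mathrm{top}}})_{\rho'}$ and all quantitative estimates read off directly. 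In the density-increment case I would observe that the increment is genuinely ``relative to $T_3B'$'' since $A \subset B$ and $T_3B' \subset B_{\rho} \subset B$, and that it is a density increment in the sense of \cref{def:increments} by \cref{rem:equivdensity}, because in \cite{BS} such increments are always certified through \cref{lem:boundbohr} and hence satisfy the stronger \cref{eq:defsizebound}.

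The step needing care --- though ultimately not deep --- is checking that \cite[Lemma~8.5]{BS}, stated in \cite{BS} with a single ambient Bohr set, still applies when the function $\m{A/B}$ and the spectrum-generating set $-T_3A_3$ are attached to two different Bohr sets whose ranks differ by up to a factor of $3$. I expect this to go through because the proof of \cite[Lemma~8.5]{BS} only dilates and takes spectra of the Bohr set containing the spectrum set (here $T_3B'$), never uses that this coincides with the Bohr set underlying the function, and tracks ranks only up to absolute constant factors --- so $\mathrm{rk}(T_3B') = \mathrm{rk}(B') \le 3d$ is absorbed into the $O(\cdot)$ and $\tilde{O}_{\alpha}(\cdot)$ notation, and the argument is insensitive to the densities lying in $[\alpha/2, 2\alpha]$ rather than being exactly $\alpha$, exactly as in \cite{BS}. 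The genuinely delicate control of the ranks and frequency sets of Bohr sets --- the main new difficulty of the paper (cf.\ \cref{rem:frequencysets}) --- does not really enter at this stage; it is deferred to the density-increment iteration that will consume this proposition.
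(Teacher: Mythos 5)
Your first step matches the paper's: apply \cref{lem:progressions} and either land in conclusion (2) or obtain some $\eta \gg \alpha$ and $A \in \{A_1, T_2A_2\}$ with $\sum_{\gamma\in\Delta_\eta(-T_3A_3)}|\mh{A/B}(\gamma)|^2 \gtrsim_\alpha \eta^{-1}\mu(B)^{-1}$. The gap is in the second step: you hand this output, for an arbitrary $\eta \gg \alpha$, to \cite[Lemma~8.5]{BS} and assert that this single lemma returns all of conclusions (1), (3)(a), (3)(b) and (4). That is not what Lemma~8.5 does. Its hypotheses require the spectral mass to sit at the bottom level $\eta \asymp \alpha$ (with the corresponding $K^2\alpha^{-1}$-type lower bound), and it takes an additive framework between the $\Delta_{1/2}$-spectra of two nested Bohr sets as part of its input; its conclusion is only the dichotomy ``density increment or non-smoothing set''. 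The proof of \cite[Proposition~8.1]{BS} --- and likewise the proof in this paper --- therefore runs a case analysis on $\eta$: for $\eta \ge \tfrac12 K^{-1}$ one applies \cite[Corollary~7.11]{BS} together with \cite[Lemma~7.8]{BS} and \cite[Lemma~5.7]{BS} to obtain the small increment (3)(a); for $K^2\alpha \le \eta \le \tfrac12 K^{-1}$ one applies \cite[Corollary~7.12]{BS} similarly to obtain the large increment (3)(b); and only in the remaining range $\alpha \ll \eta \ll K^2\alpha$ does one first build the framework via \cite[Lemma~6.2]{BS} (here between $\Delta_{1/2}(T_3B^{(2)})$ and $\Delta_{1/2}(T_3B^{(1)})$) and then invoke \cite[Lemma~8.5]{BS}. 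Your write-up also leaves unexplained where conclusion (1) ($\alpha \gg 1/k^2$) and the distinction between the two increment strengths would come from; these are artefacts of precisely this case split and the choice of $K$.

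Moreover, the actual content of the adaptation --- the reason this proposition needs restating at all --- is specifying the substitutions inside those black-box applications: $T_3B'$ in place of $B$, $T_3B^{(i)}$ in place of the dilates $B^{(i)}$ (in effect every $2\cdot B^{(i)}$ becomes $T_3B^{(i)}$ and every $2\cdot A'$ becomes $-T_3A_3$), the function $f = \ind{-T_3A_3}$ and the weight $\omega = |\mh{A/B}|^2$ restricted to $\Delta_\eta(-T_3A_3)$; this is exactly why the increments land relative to $T_3B'$ and why $\Gamma_{\mathrm{top}}$, $\Gamma_{\mathrm{bottom}}$ and $B''$ in conclusion (4) are expressed in terms of dilates of $T_3B'$. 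Your observation that the argument tolerates $\m{A/B}$ and $-T_3A_3$ being attached to different Bohr sets of comparable rank is the right sentiment, and the deferral of the rank bookkeeping to the later iteration is correct, but as written the plan collapses the three regimes into one lemma that cannot deliver them, so it does not yet constitute a proof.
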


Few changes have to be made to the proof of \cite[Proposition~8.1]{BS}, so we only give an overview of the modified proof.

\begin{proof}[Proof sketch]
We keep the notation $B^{(0)} := B'$ and $B^{(i+1)} = B^{(i)}_{\rho_i} $ for some $\rho_i$ that are the same as those in the original proof.

By \cref{lem:progressions}, either we are in the second case or there is some $\eta \gg \alpha$ such that
\begin{equation*}
	\sum_{\gamma \in \Delta_\eta (-T_3A_3)} |\mh{A/B}(\gamma)|^2 \gtrsim_{\alpha} \eta^{-1} \mu(B)^{-1},
\end{equation*}
where $A$ is either $A_1$ or $T_2A_2$.

Suppose first that this is true for some $\eta \geq \tfrac12 K^{-1}$. In this case we apply \cite[Corollary~7.11]{BS} with $T_3B'$ instead of $B$, with $T_3B^{(1)}$ in place of $B'$, the function $f$ chosen to be $\ind{-T_3A_3}$ and  and the weight function $\omega$ given by $\omega = |\mh{A/B}|^2$, restricted to $\Delta_{\eta}(-T_3A_3)$. We apply \cite[Lemma~7.8]{BS} and \cite[Lemma~5.7]{BS} in the same way as in the original proof, except that we obtain a small density increment for $A$ relative to $T_3B'$ instead of $B'$. 

The case $ \tfrac12 K^{-1}\geq \eta \geq K^2\alpha$ is similar. After using \cite[Corollary~7.12]{BS}, \cite[Lemma~7.8]{BS} and \cite[Lemma~5.7]{BS}, we conclude that $A$ has a large increment relative to $T_3B'$.

Finally, in the case $\alpha \ll \eta \ll K^2\alpha$, we have 
\begin{equation*}
\sum_{\gamma \in \tilde{\Delta}} |\mh{A/B}(\gamma)|^2 \geqs{\alpha} K^2\alpha^{-1} \mu(B)^{-1},
\end{equation*}
where $\tilde{\Delta} = \Delta_{c\alpha}(-T_3A_3)$ for some absolute constant $c>0$. We use \cite[Lemma~6.2]{BS} to construct an additive framework between $\Gamma_{\mathrm{top}} = \Delta_{1/2}(T_3B^{(2)})$ and $\Gamma_{\mathrm{bottom}} = \Delta_{1/2}(T_3B^{(1)})$. Next, we use \cite[Lemma~8.5]{BS} with $A'$ being replaced by $-T_3A_3$, $B'$ being replaced by $T_3B'$, $B^{(1)}$ being replaced by $T_3B^{(2)}$ and $B^{(2)}$ being replaced by $T_3B^{(3)}$. This either gives a density increment for $A$ with respect to $T_3B'$, or else produces a set $\Delta$ satisfying most of the conditions of the final case of \cref{prop:incrementOrANS}. The rest of the proof is the same, after replacing every occurrence of $2\cdot A'$ by $-T_3A_3$ and every occurrence of $2\cdot B^{(i)}$ by $T_3B^{(i)}$.
\end{proof}

\Cref{prop:incrementOrSolutions} is the adaptation of \cite[Proposition~5.5]{BS} to general coefficients.

\begin{proposition}
\label{prop:incrementOrSolutions}
There is a constant $C>0$ such that, for all $k\geq C$, the following holds. Let $\B$ be a regular Bohr set of rank $d$ and suppose that $\A\subset \B$ has density $\alpha$. Let $\blue{\B^{\star} := \B \cap T_2\B \cap T_3\B}$. Either 
\begin{enumerate}
	\item $\alpha \geq 2^{-O(k^2)}$, 
	\item \begin{equation*}
	T(\A, \A, \A) \gg \exp\left(-\tilde{O}_{\alpha}(d \log{2d})\right) \mu(\blue{\B^{\star}})^2,
	\end{equation*}
	or
	\item $\A$ has a density increment of one of the following strengths \blue{relative to $\B_1:=\B^{\star}$, $\B_2:=T_2^{-1}\B^{\star}$ or $\B_3:=T_3^{-1}\B^{\star}$}: 
	\begin{enumerate}
		\item (small increment) $[\alpha^{O(\eps(k))}, \alpha^{-O(\eps(k))}; \tilde{O}_{\alpha}(1)]$, or
		\item (large increment) $[\alpha^{-1/k}, \alpha^{-1+1/k}; \tilde{O}_{\alpha}(1)]$,
	\end{enumerate}
	where $\eps(k) = \frac{ \log \log \log k}{\log \log k}$.
\end{enumerate}
\end{proposition}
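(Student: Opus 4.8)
The plan is to follow the proof of \cite[Proposition~5.5]{BS} step by step, using \cref{lem:bourgain} to localise, \cref{prop:incrementOrANS} as the main dichotomy, and \cite[Proposition~11.8]{BS} --- together with the structure theorem for additively non-smoothing sets of \cite[Sections~9 and~10]{BS}, which we treat as a black box --- to dispose of the hardest case. The one structural change is that, wherever \cite{BS} replaces a Bohr set $\B$ by a dilate, we instead replace it by the triple intersection $\B^\star = \B \cap T_2\B \cap T_3\B$, and we keep track, for every Bohr set that appears, of which of $\B^\star$, $T_2^{-1}\B^\star$ or $T_3^{-1}\B^\star$ it is a dilate of (possibly intersected with a Bohr set of rank $\tilde{O}_\alpha(\alpha^{-1})$). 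We may assume $\alpha < 2^{-Ck^2}$ throughout, since otherwise we are in case~1; and we fix auxiliary parameters $h = h(k)$, $t = t(k)$ and a small localisation precision $\theta$ (a power of $\alpha$), all chosen exactly as in the corresponding part of \cite{BS}.

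\textbf{Localisation and the dichotomy.} First I would pick suitable small regular dilation parameters $\rho_1 \asymp \alpha\theta/d$ and $\rho_2 \asymp \alpha^2/d$, and set $B_1 = (\B^\star)_{\rho_1}$, $B_2 = T_2^{-1}(\B^\star)_{\rho_1}$ and $B_3 = T_3^{-1}(\B^\star)_{\rho_1\rho_2}$. Since $\B^\star \subset \B$, $\B^\star \subset T_2\B$ and $\B^\star \subset T_3\B$, we have $B_1 \subset \B_{\rho_1}$, $B_2 \subset T_2^{-1}(T_2\B)_{\rho_1} = \B_{\rho_1}$ and $B_3 \subset \B_{\rho_1\rho_2}$, so all three lie in $\B_{c\alpha\theta/d}$ and \cref{lem:bourgain} applies. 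If its second alternative occurs, $\A$ has an increment of strength $[\theta, 0; O(1)]$ relative to some $B_i$; as $B_i$ is a dilate (of rank $\mathrm{rk}(\B^\star) \leq 3d$) of one of $\B^\star, T_2^{-1}\B^\star, T_3^{-1}\B^\star$, a couple of applications of \cref{lem:relativeincrements}, together with the elementary monotonicity $[\delta, d'; C] \Rightarrow [\lambda\delta, d'; \lambda C]$ for $\lambda \geq 1$, convert it into a small increment (as in case~3) relative to $\B_1$, $\B_2$ or $\B_3$, and we are done. Otherwise there is an $x$ with $\ind{\A} * \m{B_i}(x) \geq (1-\theta)\alpha$ for $i = 1,2,3$; put $A_i := (\A - x) \cap B_i$, of relative density $\geq (1-\theta)\alpha$ in $B_i$ (and $\leq 2\alpha$, else we get a direct increment). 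Then I would apply \cref{prop:incrementOrANS} to $A_1, A_2, A_3$ with $B := B_1$ (so $T_2^{-1}B = B_2$), $B' := B_3 = T_3^{-1}B_{\rho_2}$ (of rank $\leq 3d$, with $\rho_2$ small enough), and parameters $k, h(k), t(k)$. Its first case gives $\alpha \gg k^{-2}$, which is stronger than case~1 of the proposition. Its second case gives $T(A_1, A_2, A_3) \gg \alpha^3 \mu(B_1)\mu(B_3)$; since $\mathrm{Id}_G + T_2 + T_3 = 0$ makes $T$ translation-invariant and the $A_i$ are all subsets of $\A - x$, we get $T(\A, \A, \A) \geq T(A_1, A_2, A_3)$, while $\mu(B_1)\mu(B_3) \geq \exp(-\tilde{O}_\alpha(d\log 2d))\mu(\B^\star)^2$ by \cref{lem:boundbohr} (using $\mathrm{rk}(\B^\star) \leq 3d$); this is case~2. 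Its third case gives a small or large increment for $A \in \{A_1, T_2A_2\}$ relative to $T_3B' = (\B^\star)_{\rho_1\rho_2}$, a dilate of $\B^\star$ of rank $\leq 3d$; pushing this back in stages via \cref{lem:relativeincrements} (absorbing $\tilde{O}_\alpha(h\log t)$ into $\alpha^{-O(\eps(k))}$ through the choice of $h, t$, and using $1/k = O(\eps(k))$) yields a density increment for $\A$ of the strengths in case~3 --- relative to $\B_1$ if $A = A_1$, and relative to $\B_2$ after pulling back by $T_2$ if $A = T_2A_2$.

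\textbf{The non-smoothing case.} The remaining possibility is the fourth case of \cref{prop:incrementOrANS}: a set $\Delta$ with $|\Delta| \approx \alpha^{-3}$ that is $\tfrac14$-robustly $(\tau, k')$-additively non-smoothing relative to an additive framework of height $h$ and tolerance $t$ between $\Delta_{1/2}(T_3B'_{\rho_{\mathrm{top}}})$ and $\Delta_{1/2}(T_3B'_{\rho_{\mathrm{bottom}}})$, with conditions~(d)--(e) controlling $|\mh{A/B}|^2 \circ |\mh{B''}|^2$ for $B'' = (T_3B'_{\rho_{\mathrm{top}}})_{\rho'}$. The key point is that $T_3B' = (\B^\star)_{\rho_1\rho_2}$ and all the Bohr sets derived from it here ($T_3B'_{\rho_{\mathrm{top}}}$, $T_3B'_{\rho_{\mathrm{bottom}}}$, $B''$) are dilates of $\B^\star$ of rank $\leq 3d$. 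Since \cite[Sections~9--11]{BS} are entirely insensitive to the ambient group and to the coefficients $T_i$, I would feed $\Delta$, this framework and $\m{A/B}$ into the structure theorem of \cite[Sections~9 and~10]{BS} and then into \cite[Proposition~11.8]{BS} verbatim, obtaining a density increment for $A$ --- relative to a dilate of $\B^\star$ intersected with a Bohr set of rank $\tilde{O}_\alpha(\alpha^{-1})$ --- of strength $[\alpha^{O(\eps(k))}, \alpha^{-O(\eps(k))}; \tilde{O}_\alpha(1)]$. This is exactly the step where $\eps(k) = \tfrac{\log\log\log k}{\log\log k}$ enters, from optimising the internal parameters of the structure theorem. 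As before, pulling back (by $T_2$ if $A = T_2A_2$) gives a small density increment for $\A$ relative to $\B_1$ or $\B_2$, completing the case analysis.

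\textbf{The main obstacle.} I expect the real work to be organisational rather than analytic --- \cref{prop:incrementOrANS} and \cite[Sections~9--11]{BS} carry all of the genuine content. At each step one must verify that the Bohr set under consideration is indeed a dilate of $\B^\star$, $T_2^{-1}\B^\star$ or $T_3^{-1}\B^\star$ (possibly intersected with a Bohr set of rank $\tilde{O}_\alpha(\alpha^{-1})$), and not some further iterated intersection, so that its rank never exceeds $3d$ and the losses from the repeated uses of \cref{lem:relativeincrements} stay $\tilde{O}_\alpha(1)$; and one must carry the $T_2^{-1}$- (or $T_3^{-1}$-) twist correctly when converting an increment for $T_2A_2$ (or one coming from the $B_3$-branch of \cref{lem:bourgain}) back into an increment for $\A$. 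This is precisely the difficulty flagged in \cref{rem:frequencysets}, and the reason for passing to $\B^\star$ at the outset: the identities $T_i^{-1}\B^\star \subset \B$ and $T_i(T_i^{-1}\B^\star) = \B^\star$ ensure that intersecting, dilating and un-twisting never inflate the rank beyond $3d$.
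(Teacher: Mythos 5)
Your proposal is correct and follows essentially the same route as the paper: localising with \cref{lem:bourgain} to the sets $(\B^\star)_{\rho}$, $T_2^{-1}(\B^\star)_{\rho}$, $T_3^{-1}(\B^\star)_{\rho\rho'}$, applying \cref{prop:incrementOrANS} with $B=B_1$, $B'=B_3$, and handling the non-smoothing case via \cite[Proposition~11.8]{BS}, then pulling increments back to $\B_1$, $\B_2$ or $\B_3$ with \cref{lem:relativeincrements}. The only (harmless) imprecision is that in the non-smoothing case \cite[Proposition~11.8]{BS} may also output a large increment, not only a small one, but both outcomes are covered by case~3 of the proposition.
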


\begin{proof}[Proof]
Let $\eps = c_0 \alpha^{C_0 \frac{\log\log \log k}{\log \log k}}$, for some small constant $0< c_0 \leq \frac13$ and some large constant $C_0 > 0$. We apply \cref{lem:bourgain} with
\begin{equation*}
B_1 = (\B^{\star})_{\rho}, \quad B_2 = T_2^{-1}(\B^{\star})_{\rho},\quad \text{and}\quad B_3 = T_3^{-1}(\B^{\star})_{\rho\rho'},
\end{equation*}
where $\rho = c\alpha\eps/d$ and $\rho' = c'\alpha^2/d$ ($c$ and $c'$ being small constants, chosen in particular such that $B_1$ and $B_3$ are regular.\footnote{Note that the regularity of $B_2$ follows immediately from that of $B_1$.} If we are in the second case of \cref{lem:bourgain}, then $\A$ has a small increment with respect to $B_1$, $B_2$ or $B_3$. By \cref{lem:relativeincrements}, this translates into a density increment of the same strength with respect to $\B_1$, $\B_2$ or $\B_3$, as required. 

Let us assume henceforth that we are in the first case of \cref{lem:bourgain}. Let
\begin{equation*}
A_1 = (\A - x) \cap B_1, \quad A_2 = (\A - x) \cap B_2\quad \text{and}\quad A_3 = (\A - x) \cap B_3.
\end{equation*}
If $\alpha_i$ is the density of $A_i$ relative to $B_i$, for $1\leq i\leq 3$, then \cref{lem:bourgain} ensures that 
\begin{equation*}
\alpha_i \in [(1-\eps)\alpha, (1+\eps)\alpha].
\end{equation*}

We now apply \cref{prop:incrementOrANS} with $B = B_1$, $B' = B_3$, $h = \lceil c_1 \log\log k/\log\log \log k\rceil$ and $t = \lceil C_2 \log k\rceil$, for some suitable constants $c_1, C_2 > 0$. 
\begin{enumerate}
	\item In the first case of the conclusion of \cref{prop:incrementOrANS}, $\alpha \gg 1/k^2 \geq 2^{-O(k^2)}$.
	\item In the second case, 
	\begin{equation*}
	T(A_1, A_2, A_3) \gg \alpha^3 \mu(B_1) \mu(B_3) \gg \exp\left(-\tilde{O}_{\alpha}(d\log 2d)\right) \mu(\B^{\star})^2
	\end{equation*}
	by \cref{lem:boundbohr}. Since the $A_i$'s are subsets of the same translate of $\A$ and the equation $a_1+T_2a_2+T_3a_3 = 0$ is translation-invariant, we have $T(\A, \A, \A) \geq T(A_1, A_2, A_3)$ and we are done.
	\item In the third case, either $A_1\subset B_1$ or $T_2A_2\subset B_1$ has a density increment of strength 
	\begin{equation*}
	[1, \alpha^{-1/k}; \tilde{O}_{\alpha}(h \log t)]\quad \text{or}\quad[\alpha^{-1/k}, \alpha^{-1+1/k}; \tilde{O}_{\alpha}(h \log t)]
	\end{equation*}
	with respect to $T_3B_3 = (B_1)_{\rho'}$. Note that $h \log t = \tilde{O}_{\alpha}(1)$, or else we have the first case of the conclusion. Therefore, $\A$ has a density increment of strength 
	\begin{equation*}
	[1, \alpha^{-1/k}; \tilde{O}_{\alpha}(1)]\quad \text{or}\quad[\alpha^{-1/k}, \alpha^{-1+1/k}; \tilde{O}_{\alpha}(1)]
	\end{equation*}
	relative to either $(B_1)_{\rho'}$ or $T_2^{-1}(B_1)_{\rho'} = (B_2)_{\rho'}$ (here we use the fact that $\eps$ is sufficiently small, similarly as in \cref{eq:epsilonchoice}). By \cref{lem:relativeincrements}, this implies that $\A$ has an increment of the same strength relative to $\B_1$ or $\B_2$.
	\item Finally, suppose that the last case of the conclusion of \cref{prop:incrementOrANS} holds. Then we may apply \cite[Proposition~11.8]{BS} with $B = B_1$, $B' = (T_3B_3)_{\rho_{\mathrm{top}}}$ and $B'' = (T_3B_3)_{\rho_{\mathrm{top}}\rho'}$. The hypotheses of \cite[Proposition~11.8]{BS} exactly match the last case of \cref{prop:incrementOrANS} for some $K = \alpha^{-O(1/k)}$.
	
	The number $M$ in \cite[Proposition~11.8]{BS} satisfies $M = \alpha^{-O(\eps(k))}$, or else $\alpha \geq 2^{-O(k^2)}$ and we are in the first case of our conclusion. Taking $C$ large enough in the statement of \cref{prop:incrementOrSolutions}, we see that the first case of \cite[Proposition~11.8]{BS} cannot hold. In the other two cases, either $A_1\subset B_1$ or $T_2A_2\subset B_1$ has a density increment of strength 
	\begin{equation*}
	[\alpha^{O(\eps(k))}, \alpha^{-O(\eps(k))}; \tilde{O}_{\alpha}(1)]\quad \text{or}\quad[\alpha^{-1/k}, \alpha^{-1+1/k}; \tilde{O}_{\alpha}(1)]
	\end{equation*}
	with respect to $B''$. As in the previous case, we conclude that $\A$ has a density increment of the same strength with respect to $\B_1$ or $\B_2$.\qedhere
\end{enumerate}
\end{proof}

We are now ready to prove \cref{thm:main}. The strategy is to iterate \cref{prop:incrementOrSolutions} as long as we are in the small increment case, and then apply \cref{thm:weakerbound} when one of the other cases applies. 

\begin{proof}[Proof of \cref{thm:main}]
Let $A\subset G$ of density $\alpha$. In this proof, $\alpha$ will always denote the density of this initial $A$.

Let $1\leq C_1 = O(1)$ be some absolute constant, chosen in particular larger than the implied constants in the exponents of the small increment case of \cref{prop:incrementOrSolutions}. Let $k$ be some constant large enough such that \cref{prop:incrementOrSolutions} holds and such that $8C_1\eps(k) \leq 1/2$. Let $1\leq C_2 = \tilde{O}_{\alpha}(1)$ be some fixed quantity (depending only on $\alpha$), chosen in particular larger than the implicit constants of \cref{prop:incrementOrSolutions} hidden in the $\gg$, $O(\cdot)$ and $\tilde{O}_{\alpha}(1)$ notation. By definition of $\tilde{O}_{\alpha}(\cdot)$, these implicit constants are still bounded by $C_2$ if we use \cref{prop:incrementOrSolutions} with some different relative density $\alpha'$, as long as $\alpha' \geq \alpha$. Note that we may assume that $2^{C_2k^2} \leq \alpha^{-1/2}$, or else we are done by an application of \cref{thm:weakerbound} with $B = G$.

\medbreak\textbf{Iterative construction.}
We inductively construct two sequences $(A_n)$ and $(B_n)$, where, for each $n$, $A_n$ is a subset of $B_n$ relative density $\alpha_n$. Let $A_0 = A$ and $B_0 = G$. Assume that $A_i$ and $B_i$ have been constructed for $i<n$. We use \cref{prop:incrementOrSolutions} with $\A = A_{n-1}$ and $\B = B_{n-1}$. If we are not in case (3)(a), then we stop the construction of the sequences. Otherwise, case (3)(a) occurs, and we have a small increment for $A_n$. Hence, there are sets $A_n \subset B_n$ such that
\begin{itemize}
	\item $A_{n}$ is a subset of a translate of $A_{n-1}$;
	\item $A_n$ is a subset of $B_n$ of relative density $\alpha_n \geq \left(1+C_2^{-1}\alpha^{C_1\eps(k)}\right)\alpha_{n-1}$;
	\item $B_n$ is a regular Bohr set of the form 
	\begin{equation}
		\label{eq:inters1}
		B_n = (S_n B_{n-1}^{\star})_{\rho_n} \cap \widetilde{B}_{n},
	\end{equation}
	where \begin{itemize}
		\item $B_{n-1}^{\star}$ is the Bohr set
		\begin{equation}
		\label{eq:inters2}
		B_{n-1}^{\star}:= B_{n-1} \cap T_2 B_{n-1}\cap T_3 B_{n-1},
		\end{equation}
		whose rank we denote by $d_{n-1}^{\star}$,
		\item $S_n$ is either $\mathrm{Id}_G$, $T_2^{-1}$ or $T_3^{-1}$,
		\item $\widetilde{B}_n = \mathrm{Bohr}(\widetilde{\Gamma}_n, \widetilde{\nu}_n)$, where $|\widetilde{\Gamma}_{n}| \leq C_2\alpha^{-C_1\eps(k)}$ and 
	\begin{equation}
	\label{eq:boundwidth}
	\bigg(\frac{\rho_n}{4}\bigg)^{d_{n-1}^{\star}}\prod_{\gamma \in \widetilde{\Gamma}_n} \frac{\widetilde{\nu}_n(\gamma)}{8} \geq  \exp \left(-\tilde{O}_{\alpha} \left(d_{n-1}^{\star}+\alpha^{-C_1\eps(k)}\right) \log\left(d_{n-1}^{\star}\right)\right).
	\end{equation}
	\end{itemize}
\end{itemize}

\medbreak\textbf{Analysis of the algorithm.} Note that, since $1\geq \alpha_n \geq \left(1+C_2^{-1}\alpha^{C_1\eps(k)}\right)^n\alpha$, this construction must terminate in
\begin{equation*}
l = \tilde{O}_{\alpha}\left(\alpha^{-C_1\eps(k)}\right)
\end{equation*}
steps. We then arrive at $A_l\subset B_l$ for which one of the cases (1), (2) and (3)(b) of \cref{prop:incrementOrSolutions} applies.

Let $W_i$ be the set of all automorphisms obtained by composing $i$ elements of $\{\mathrm{Id}_{G}, T_2, T_3, T_2^{-1}, T_3^{-1}\}$. Since $T_3 = -\mathrm{Id}_{G} - T_2$, these automorphisms commute, which implies that $|W_i| \leq (2i+1)^2$.

An immediate induction using \cref{eq:inters1,eq:inters2} shows that 
\begin{equation}
\label{eq:bohrsetsweaker}
B_i^{\star} \supset  \bigcap_{j=1}^{i} \bigcap_{T\in W_{2i+1}} (T \widetilde{B}_j)_{\rho_{j+1}\cdots \rho_i}
\end{equation}
for $0\leq i\leq l$. Similarly, we see that the frequency set of $B_i^{\star}$ is contained in 
\begin{equation*}
 \bigcup_{j=1}^{i} \bigcup_{T\in W_{2i+1}} T \widetilde{\Gamma}_j.
\end{equation*}
This implies that
\begin{equation*}
d_{l}^{\star} \ll l^3 C_2\alpha^{-C_1\eps(k)} = \tilde{O}_{\alpha}\left(\alpha^{-4C_1\eps(k)}\right).
\end{equation*}
In particular, \cref{eq:boundwidth} becomes
\begin{equation}
\label{eq:boundwidthweaker2}
	\bigg(\frac{\rho_n}{4}\bigg)^{d_{n-1}^{\star}}\prod_{\gamma \in \widetilde{\Gamma}_n} \frac{\widetilde{\nu}_n(\gamma)}{8} \geq   \exp \left( - \tilde{O}_{\alpha}\left(\alpha^{-4C_1\eps(k)}\right) \right),
\end{equation}
for $1\leq n \leq l$. 

We now use \cref{lem:boundbohr} to give a lower bound for $\mu(B_l^{\star})$. By \cref{eq:bohrsetsweaker}, we have 
\begin{equation*}
\mu(B_l^{\star}) \geq \prod_{j=1}^{l}\Bigg( \bigg(\frac{1}{4}\prod_{n=j+1}^{l} \rho_n\bigg)^{\mathrm{rk}(\widetilde{B}_j)}\prod_{\gamma\in \widetilde{\Gamma}_j} \frac{\widetilde{\nu}_j(\gamma)}{8}\Bigg)^{|W_{2l+1}|}.
\end{equation*}
Using \cref{eq:boundwidthweaker2} and the fact that $\mathrm{rk}(\widetilde{B}_j) \leq d_{n-1}^{\star}$ for  $j+1\leq n\leq l$, this yields
\begin{equation}
\label{eq:sizelastBohrset}
\mu(B_l^{\star}) \geq  \exp \left( - \tilde{O}_{\alpha}\left(\alpha^{-4C_1\eps(k)}\right) \right)^{ \tilde{O}_{\alpha}\left(\alpha^{-4C_1\eps(k)} \right)} \geq  \exp \left( - \tilde{O}_{\alpha}\left(\alpha^{-8C_1\eps(k)}\right) \right).
\end{equation}
If $B_l^{\star} = \mathrm{Bohr}(\Gamma_l^{\star}, \nu_l^{\star})$, this reasoning actually shows the more precise bound
\begin{equation}
\label{eq:sizelastBohrset2}
\prod_{\gamma\in \Gamma_l^{\star}} \frac{\nu_l^{\star}(\gamma)}{8} \geq \exp \left( - \tilde{O}_{\alpha}\left(\alpha^{-8C_1\eps(k)}\right) \right).
\end{equation}

\medbreak\textbf{Concluding the proof.}
We now apply \cref{prop:incrementOrSolutions} to $\A = A_l$ and $\B = B_l$. The small increment case cannot occur, by construction of the sequences $(A_l)$ and $(B_l)$.
\begin{itemize}
	\item If we are in the case (1) of \cref{prop:incrementOrSolutions}, then $\alpha_l \geq 2^{-C_2k^2}$. In this case we apply \cref{thm:weakerbound} and obtain the bound
	\begin{equation*}
	T(A_l, A_l, A_l) \gg \exp \left(-\tilde{O}_{\alpha}\left(\alpha^{-4C_1\eps(k)}+ 2^{C_2k^2}\right) \right) \bigg( \prod_{\gamma\in \Gamma_l} \frac{\nu_l(\gamma)}{8}\bigg)^{\tilde{O}_{\alpha}(1)},
	\end{equation*}
	where $B_l = \mathrm{Bohr}(\Gamma_l, \nu_l)$. Using \cref{eq:sizelastBohrset2}, we deduce that
	\begin{equation*}
	T(A, A, A) \geq T(A_l, A_l, A_l) \geq \exp \left( - \tilde{O}_{\alpha}\left(\alpha^{-8C_1\eps(k)}+ 2^{C_2k^2}\right) \right).
	\end{equation*}
	
	\item In the second case, we directly obtain 
	\begin{equation*}
	T(A_l, A_l, A_l) \gg \exp \left(-\widetilde{O}_{\alpha}\left(\alpha^{-4C_1\eps(k)}\right)\right) \mu(B_l)^2,
	\end{equation*}
	and thus 
	\begin{equation*}
	T(A, A, A) \geq T(A_l, A_l, A_l) \geq  \exp \left( - \tilde{O}_{\alpha}\left(\alpha^{-8C_1\eps(k)}\right) \right)
	\end{equation*}
	by \cref{eq:sizelastBohrset}.
	\item Finally, in the large increment case, there are some $\rho>0$, $T\in \{\mathrm{Id}_G, T_2^{-1}, T_3^{-1}\}$ and $\widetilde{B} = \mathrm{Bohr}(\widetilde{\Gamma}, \widetilde{\nu})$ such that ${|\widetilde{\Gamma}| \leq C_2\alpha^{-1+1/k}}$ and, if 
	\begin{equation*}
	B'' := (TB_l^{\star})_{\rho} \cap \widetilde{B},
	\end{equation*}
	then $\norm{\ind{A}*\m{B''}} \geqs{\alpha} \alpha^{1-1/k}$ and
	\begin{equation}
	\label{eq:sizeboundlargeinc}
	\left(\frac{\rho}{4}\right)^{\mathrm{rk}(TB_l^{\star})} \prod_{\gamma\in \widetilde{\Gamma}} \frac{\widetilde{\nu}(\gamma)}{8} \geq \exp \left(-\tilde{O}_{\alpha}\left(\alpha^{-4C_1\eps(k)}+\alpha^{-1+1/k}\right)\right).
	\end{equation}
	Write $B'' = \mathrm{Bohr}(\Gamma'', \nu'')$. Then
	\begin{align*}
	\prod_{\gamma\in \Gamma''} \frac{\nu''(\gamma)}{8} &\geq \left(\left(\frac{\rho}{4}\right)^{\mathrm{rk}(TB_l^{\star})} \prod_{\gamma\in \widetilde{\Gamma}} \frac{\widetilde{\nu}(\gamma)}{8}\right) \left(\prod_{\gamma\in \Gamma_l^{\star}} \frac{\nu_l^{\star}(\gamma)}{8}\right) \\
	&\geq \exp \left(-\tilde{O}_{\alpha}\left(\alpha^{-8C_1\eps(k)}+\alpha^{-1+1/k}\right)\right)
	\end{align*}	
	by \cref{eq:sizeboundlargeinc,eq:sizelastBohrset2}. We now apply \cref{thm:weakerbound} to a suitable subset $A''$ of a translate of $A$ and the Bohr set $B''$ to find that
	\begin{align*}
	T(A'', A'', A'') &\geq \exp \left(-\tilde{O}_{\alpha}\left(\alpha^{-4C_1\eps(k)}+\alpha^{-1+1/k}\right)\right) \bigg( \prod_{\gamma\in \Gamma''} \frac{\nu''(\gamma)}{8}\bigg)^{\tilde{O}_{\alpha}(1)} \\
	&\geq \exp \left(-\tilde{O}_{\alpha}\left(\alpha^{-8C_1\eps(k)}+\alpha^{-1+1/k}\right)\right).
	\end{align*}
\end{itemize}
Therefore, we obtain, in all three cases, the lower bound
\begin{equation*}
T(A, A, A) \geq\exp \left(-\tilde{O}_{\alpha}\left(\alpha^{-8C_1\eps(k)}+\alpha^{-1+1/k}+ 2^{C_2k^2}\right)\right).
\end{equation*}
Choosing $c = 1/(2k)$, say, we obtain 
\begin{equation*}
T(A, A, A) \geq \exp \left(-{O}(\alpha^{-1+c})\right).
\end{equation*}
On the other hand, since $A$ contains only trivial solutions to $a_1 + T_2a_2 + T_3a_3 = 0$, we have 
\begin{equation*}
T(A, A, A) = \frac{\alpha}{|G|} \leq \frac{1}{|G|}.
\end{equation*}
Therefore, $|G| \leq \exp\left({O}(\alpha^{-1+c})\right)$, which can be rewritten as
\begin{equation*}
|A| \ll \frac{|G|}{(\log{|G|})^{1+c'}},
\end{equation*}
where $c' = \frac{1}{1-c}-1$. This finishes the proof of \cref{thm:main}.
\end{proof}


\section*{Acknowledgments} 
I am deeply grateful to Thomas Bloom and Olof Sisask for bringing this problem to my attention and for the many fruitful conversations. I would also like to thank Timothy Gowers for his guidance and the very helpful discussions throughout my research internship for the \'Ecole Normale Supérieure.

\bibliographystyle{amsplain}
\providecommand{\bysame}{\leavevmode\hbox to3em{\hrulefill}\thinspace}
\providecommand{\MR}{\relax\ifhmode\unskip\space\fi MR }
\providecommand{\MRhref}[2]{%
  \href{http://www.ams.org/mathscinet-getitem?mr=#1}{#2}
}
\providecommand{\href}[2]{#2}


\begin{dajauthors}
\begin{authorinfo}[pgom]
  C\'edric Pilatte\\
  \'Ecole Normale Supérieure\\
  Paris, France\\
  cedric.pilatte\imageat{}ens\imagedot{}fr \\
\end{authorinfo}
\end{dajauthors}

\end{document}